\documentclass[letterpaper, 10 pt, conference]{ieeeconf}  
\usepackage{amsmath}
\usepackage{amssymb}
\IEEEoverridecommandlockouts                              
\overrideIEEEmargins

\usepackage{graphics} 
\usepackage{epsfig} 
\usepackage{epstopdf}
\usepackage{mathptmx} 
\usepackage{times} 
\usepackage{amsmath} 
\usepackage{amssymb}  
\usepackage{subfigure}
\usepackage{breqn}
\usepackage{bbm}
\usepackage{dsfont}

\newtheorem{defn}{Definition}
\newtheorem{thm}{Theorem}

\newtheorem{lem}{Lemma}

\newtheorem{cor}{Corollary}

\newcommand{\mcl}[1]{\mathcal{ #1}}

%
%
%
%
%
%

\newcommand{\R}{\mathbb{R}}

\newcommand{\N}{\mathbb{N}}

\title{\LARGE \bf
	Using SOS for Optimal Semialgebraic Representation of Sets: \\Finding Minimal Representations of Limit Cycles, Chaotic Attractors and Unions}


\author{Morgan Jones%
	\thanks{M. Jones is with the School for the Engineering of Matter, Transport and Energy, Arizona State University, Tempe, AZ, 85298 USA. e-mail: {\tt \small morgan.c.jones@asu.edu } },
	Matthew M. Peet
	\thanks{M. Peet is with the School for the Engineering of Matter, Transport and Energy, Arizona State University, Tempe, AZ, 85298 USA. e-mail: {\tt \small mpeet@asu.edu } }
}

	\begin{document}

		\maketitle
		\thispagestyle{empty}
		\pagestyle{empty}
\begin{abstract}
 In this paper we show that Sum-of-Squares optimization can be used to find optimal semialgebraic representations of sets. These sets may be explicitly defined, as in the case of discrete points or unions of sets; or implicitly defined, as in the case of attractors of nonlinear systems. We define optimality in the sense of minimum volume, while satisfying constraints that can include set containment, convexity, or Lyapunov stability conditions. Our admittedly heuristic approach to volume minimization is based on the use of a determinant-like objective function. We provide numerical examples for the Lorenz attractor and the Van der Pol limit cycle.
\end{abstract}

\section{Introduction}

In this paper we consider nonlinear Ordinary Differential Equations (ODE's) of the form
\begin{equation} \label{ode}
\dot{x}(t)=f(x(t)), \quad x(0)=x_0.
\end{equation}
where $f: \R^n \to \R^n$ is the vector field and $x_0 \in \R^n$ is the initial condition.  A set $A \in \R^n$ is an attractor of System \eqref{ode} if for any solution $x(t)$, there exist a $T>0$ such that $x(t)\in A$ for all $t > T$. For a given polynomial $f$, our goal is to use SOS and polynomial Lyapunov functions to parameterize and optimize over the set of attractors while minimizing volume of the attractor.

Attractors capture the long-term properties of systems and can be thought of as a minimal notion of stability for chaotic systems with no stable limit cycles or equilibrium point.  The first and most famous example of a chaotic system with an attractor was proposed in 1963 by E.N. Lorenz to model convection rolls in the atmosphere. The Lorenz attractor \cite{Colin_1982}, with its distinctive butterfly shape, contains three equilibrium points and is of zero volume \cite{Strogatz_1994}. While these ``Chaotic attractors'' have been shown within the purview of chaos theory to limit the ability of models to predict future events~\cite{Lee_2016}, identification of a minimal attractor can improve our ability to understand chaotic systems by bounding the domain on which determinism fails.

A Lyapunov function, $V$, is a generalization of the notion of energy and given a Lyapunov function, we can bound a stable attractor, $A$, by identification of the maximum energy of any point in the attractor $\gamma = \sup_{x \in A}V(x)$. Then the attractor is contained in the level set $L(V,\gamma) :=\{x \in \R^n \;:\; V(x) \le \gamma\}$. This approach allows one to identify invariant subsets of an ODE and in~\cite{Li_2004}, a quadratic Lyapunov function was used to show the Lorenz attractor is contained in an ellipsoid of finite major axis. Meanwhile, in~\cite{Yu_2005} the estimate of the Lorentz attractor was improved through the use of a non-quadratic Lyapunov function.
A further refined bound on the Lorentz attractor was given in~\cite{Wang_2012} where Lyapunov methods were used to seed the initial approximation of a time advecting algorithm. Most recently, in unpublished work~\cite{Goluskin_2018}, a heuristic reduction of Putinar's Positivstellensatz to scalar multipliers was proposed for the purpose of using Sum-of-Squares (SOS) optimization to find polynomial Lyapunov functions which bound the Lorentz attractor. However, this work was unable to provide a metric for minimizing the volume of the resulting attractor.



\textbf{Sublevel Set Volume Minimization}

Set approximations has many practical applications. For instance, the F/A-18 fighter jet is susceptible to an unstable oscillation called ``falling leave mode''. In \cite{Chakroborty_2011_falling_leaf_mode} a sublevel set inner approximate of the region of attraction (ROA) for the dynamics of F/A-18 fighter jet was found. The use of sublevel sets of SOS polynomials for set estimation of ROA's for aircraft dynamics were also used in \cite{Chakroborty_2011} and \cite{Khrabrov_2017}. In \cite{Ghaoui_2001_ellipse_state_est} \cite{Durieu_2001_ellipsoidal_state_est} ellipsoidal sets are used as outer set approximation for state estimation. Zonotope sets were used as an outer approximate in \cite{Alamo_2005_zonotope_state_est} for state estimation and \cite{Ingimundarson_2009_zonotopes_fault_detection} for fault detection. In \cite{Hwang_2003_polytopic_reachable_sets} polytopic sets were used as outer approximations of reachable sets.

In this paper we construct an outer approximation of a set by constraining a sublevel set, of the form $L(V,1)$, to contain the set. Furthermore we heuristically minimize the volume of $L(V,1)$ to improve the approximation. The proposed set approximation method in this paper can be broken down into two cases. The first case is when the set to be approximated is explicitly defined as unions of semialgebriac sets. The second case is when the set to be approximated is implicitly defined as an attractor of an ODE of Form \eqref{ode}. These two cases can each be formulated as an optimization problem that has an objective function related to the volume of the outer set approximate and constraints ensuring the set is contained in the outer approximate. In the first case the unions of semialgebriac sets are constrained to be contained inside the outer approximate using Putinar's Positivestellensatz, whereas in the second case the attractor is constrained to be contained inside the outer approximate using Lyapunov theory. In both cases to make the optimization problem tractable we consider sets that can be written as sublevel sets of Sum-of-Squares (SOS) polynomials, $L(V,1)$ where $V(x)=z_d(x)^T P z_d(x)$, $P>0$ is a positive definite matrix and $z_d(x)$ is a vector of monomials of degree $d$ or less. We note that increasing the value of the eigenvalues of $P$ increases the value of $V(x)$ for all $x \in \R^n$ and thus the volume of $L(V,1)$ is reduced. Since $\det P$ is the product of the positive eigenvalues of $P$, we propose to minimize the convex objective function $-\log \det P$ to reduce the volume of $L(V,1)$. Furthermore in \cite{Boyd_1994} it is shown in the case of $d=1$ that $\det P^{-1}$ is proportional to the volume of $L(V,1)$.

Volume minimization of sublevel sets is a difficult problem. Using $\log \det$ functions as a metric for volume of a sublevel set of an SOS polynomial was first proposed in~\cite{Magnani_2005}, although that work only treated explicit constraints generated by containment of a set of points. In~\cite{Ahmadi_2017}, this approach was extended to containment of intersections of semialgebriac sets. In \cite{Dabbene_2017} an optimization problem with an objective function involving trace is constructed. In the current paper, we retain the $\log \det $ objective, but provide a more rigorous justification, while extending use of this approach to include estimating attractor sets and representing the union of semialgebraic sets.


The rest of this paper is as follows; in Section \ref{Section: Outer Approximation of sets} we formulate a convex optimization problem that is solved by the heuristic best outer approximation of some semialgebriac set. In Section \ref{sec: Numerical examples of set containment} we present numerical examples of our minimum volume set containment algorithm. In Section \ref{sec: outeer approx of attractors} we formulate the problem of computing the outer approximate of an attractor as an optimization problem involving volume minimization. In Section \ref{sec: Numerical Examples of Attractor Approximation} we present our numerical examples of our attractor approximates for the Van der Poll system and Lorenz system. Finally we give our conclusion in Section \ref{sec:conclusion} and future work in \ref{sec:future work}.


\section{Notation}
For a set $A \subset \R^n$ we define the indicator function $\mathds{1}_A : \R^n \to \R$ by $\mathds{1}_A(x) = \begin{cases}
 & 1 \text{ if } x \in A\\
& 0 \text{ otherwise}
\end{cases}$. For a set $A \subset \R^n$ we define $vol\{A\}= \int_{\R^n} \mathds{1}_{A}(x) dx$. We denote the power set of $\R^n$ by $P(\R^n)=\{X:X\subset \R^n\}$. For two sets $A,B \subset \R^n$ we denote $A/B=\{x \in A: x \notin B\} $. If $M$ is a subspace of a vector space $X$ we denote equivalence relation $\sim_M$ for $x,y \in X$ by $x \sim_M y$ if $x-y \in M$. We denote quotient space by $X \pmod M=\{ \{y \in X: y \sim_M x \}: x \in X\}$. For a function $V: \R^n \to \R$ and a scalar $\alpha>0$ we define the $\alpha$-sublevel set by $L(V, \alpha):=\{x \in \R^n: V(x) < \alpha \}$. We say $S \subset \R^n$ is a semi-algebraic set if it can be written in the form $S = \{x \in \R^n: g_1(x) \le 0,..., g_m(x) \le 0 \}$ for some functions $g_i: \R^n \to \R$ for $i \in \{1,...,m\}$. We denote $S^n_{++}$ to be the set of positive definite $n \times n$ matrices. We denote the set $GL(n, \R)$ to be the set of invertible $n \times n$ matrices with real elements. For a matrix $M \in GL(n,\R)$ we define an ellipse $\mcl E_M := \{ x \in \R^n: x^T M^T M x \le 1 \} \in P(\R^n)$. For $x \in \R^n$ we denote $z_d(x)$ to be the vector of monomial basis in $n$-dimensions with maximum degree $d \in \N$. We say the polynomial $p :\R^n \to \R$ is Sum-of-Squares (SOS) if there exists polynomials $p_i :\R^n \to \R$ such that $p(x) = \sum_{i=1}^{k} (p_i(x))^2$. We denote $\sum_{SOS}$ to be the set of SOS polynomials.


\section{Outer Approximation of Sets} \label{Section: Outer Approximation of sets}
In this paper we would like to compute an outer approximations of a set. In general we formulate an optimization problem of the form,
\begin{align} \label{opt: general set countainment}
\min_{X \in C} \{ & D(X,Y) \}\\ \nonumber
& \text{subject to: } Y \subseteq X \nonumber
\end{align}
where $Y \subset \R^n$, $C \subset P(\R^n)$ and $D:P(\R^n) \times P(\R^n) \to \R$ is some metric that measures the distance between two subsets of $\R^n$.
\subsection{Volume is a Metric for Set Approximation} \label{section: A metric for volume}
In this section we propose a metric that can be used in the optimization problem \eqref{opt: general set countainment} based on the volume of a set, $vol\{A\}= \int_{\R^n} \mathds{1}_{A}(x) dx$ where $A \subset \R^n$.

\begin{defn}
	 $D: X \times X \to \R$ is a \textit{metric} over some set $X$ if $D$ satisfies the following properties $\forall x.y \in X$,
	\begin{itemize}
		\item $D(x,y) \ge 0$,
		\item $D(x,y)=0$ iff $x=y$,
		\item $D(x,y)=D(y,x)$,
		\item $D(x,z) \le D(x,y) + D(y,z)$.
	\end{itemize}
\end{defn}

\begin{lem}
	Consider the quotient space, 
\[
C:=P(\R^n) \pmod{\{X \in P(\R^n): X \ne \emptyset, vol \{ X\} =0 \}}.
\] Then the function $D: C \times C \to \R$,
	\begin{equation}
	D_V(X,Y)= vol\{ (X/Y) \cup (Y/X) \}
	\end{equation}
	defines a metric over $C$.
\end{lem}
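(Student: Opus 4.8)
The plan is to recognize $D_V$ as the classical symmetric-difference pseudometric and then verify that it descends to a genuine metric on the quotient $C$. The first step is to reinterpret the objects. Under symmetric difference $(X,Y)\mapsto (X/Y)\cup(Y/X)$ the power set $P(\R^n)$ is an abelian group with identity $\emptyset$ (indeed an $\mathbb{F}_2$-vector space), in which the formal difference $X-Y$ appearing in the quotient construction is exactly $(X/Y)\cup(Y/X)$; thus the equivalence relation defining $C$ is $X\sim_M Y$ iff $(X/Y)\cup(Y/X)$ has volume zero. The second reformulation is analytic: for any $X,Y\subset\R^n$ the indicator of the symmetric difference satisfies the pointwise identity $\mathds{1}_{(X/Y)\cup(Y/X)}(x)=|\mathds{1}_X(x)-\mathds{1}_Y(x)|$, so that $D_V(X,Y)=\int_{\R^n}|\mathds{1}_X(x)-\mathds{1}_Y(x)|\,dx$. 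This realizes $D_V$ as an $L^1$-type distance between indicator functions, which reduces the metric axioms to standard facts about integrals.

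With this in hand, non-negativity is immediate since the integrand is non-negative, and symmetry follows from $|\mathds{1}_X-\mathds{1}_Y|=|\mathds{1}_Y-\mathds{1}_X|$. For the triangle inequality I would use the pointwise scalar inequality $|\mathds{1}_X(x)-\mathds{1}_Z(x)|\le |\mathds{1}_X(x)-\mathds{1}_Y(x)|+|\mathds{1}_Y(x)-\mathds{1}_Z(x)|$ holding for every $x\in\R^n$, then integrate both sides and invoke linearity and monotonicity of the integral. Equivalently, one can argue purely set-theoretically via the inclusion $(X/Z)\cup(Z/X)\subseteq[(X/Y)\cup(Y/X)]\cup[(Y/Z)\cup(Z/Y)]$ and then apply monotonicity and subadditivity of $\text{vol}\{\cdot\}$; I would verify the inclusion by a short case check on whether a point lies in $Y$ or not.

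The remaining and more delicate point is the quotient structure, since identity-of-indiscernibles fails on $P(\R^n)$ itself: two distinct sets whose symmetric difference is a null set have $D_V=0$. I would handle this in two parts. First, $D_V(X,Y)=0$ holds precisely when $\text{vol}\{(X/Y)\cup(Y/X)\}=0$, which by the reformulation above is exactly the condition $X\sim_M Y$; hence on $C$ we have $D_V([X],[Y])=0$ iff $[X]=[Y]$. Second, I must confirm $D_V$ is well defined on equivalence classes: if $[X]=[X']$ and $[Y]=[Y']$, then $D_V(X,X')=D_V(Y,Y')=0$, so the already-established triangle inequality gives $D_V(X,Y)\le D_V(X,X')+D_V(X',Y')+D_V(Y',Y)=D_V(X',Y')$, and the reverse inequality follows symmetrically, so the value is independent of the chosen representatives.

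I expect the main obstacle to be the bookkeeping around the quotient rather than the analysis: one must confirm that the null-set subspace is precisely what the symmetric difference detects, so that the four conditions, which hold only as a \emph{pseudometric} on $P(\R^n)$, promote to genuine metric axioms on $C$. The analytic heart of the argument, namely the triangle inequality, is routine once the distance is phrased through indicator functions, since it inherits directly from the triangle inequality for real numbers under the integral.
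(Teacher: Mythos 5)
Your proof is correct, and its analytic core---the triangle inequality---rests on the same set-theoretic fact the paper uses: the paper invokes $X/Z\subseteq (X/Y)\cup(Y/Z)$ and you invoke the symmetric-difference version $(X/Z)\cup(Z/X)\subseteq\bigl[(X/Y)\cup(Y/X)\bigr]\cup\bigl[(Y/Z)\cup(Z/Y)\bigr]$, which is just that inclusion applied twice; your alternative phrasing through $D_V(X,Y)=\int|\mathds{1}_X-\mathds{1}_Y|\,dx$ is an equivalent $L^1$ repackaging rather than a genuinely different argument. Where you go beyond the paper is in the treatment of the quotient: the paper disposes of the identity-of-indiscernibles axiom with the single sentence that it ``follows from properties of the quotient space,'' and it never checks that $D_V$ is well defined on equivalence classes at all. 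You make both points explicit---identifying the equivalence relation as ``symmetric difference has volume zero'' (which also quietly repairs the paper's slightly off definition of the null subspace, which excludes $\emptyset$ and hence is not literally closed under $X-X$), and deriving representative-independence from the triangle inequality. That well-definedness step is a real gap in the paper's proof that your write-up fills, so your version is the more complete of the two; the only caveat you share with the paper is that $vol$ and the integral of an indicator are not actually defined for arbitrary (possibly nonmeasurable or infinite-volume) elements of $P(\R^n)$, an issue inherited from the paper's setup rather than introduced by you.
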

\begin{proof}
	It is clear that $D(X,Y) \ge 0$ and $D(X,Y)=D(Y,X)$ for all $X,Y \in C$. Property $D(X,Z) \le D(X,Y) + D(Y,Z)$ follows from the relationship: $X/Z\subseteq X/Y \cup Y/Z$ for any $X,Y,Z \in P(\R^n)$. Finally $D(X,Y)=0$ iff $X=Y$ follows from properties of the quotient space $C$.
\end{proof}


In this paper we have chosen to use the metric $D_V$ in optimization problems of the form \eqref{opt: general set countainment}. This is because, as we will see in the next section, there is a relationship between $D_V$ and the determinant of some matrix $M$ when the constraint set, $C$, is of the form $C = \{ \mcl E_M: M \in GL(n,\R) \}$.

\begin{lem} \label{lem: D_V is related to vol}
	If $Y \subseteq X$ then $D_V(X,Y)= vol\{X\}-vol\{Y\}$.
\end{lem}
\begin{proof}
	\begin{align*}
	D_V(X,Y) & = vol\{X/Y \cup Y/X \}\\
	& = vol\{X/Y\} + vol\{Y/X\}\\
	&= vol\{X/Y\}\\
	&= vol\{X\} - vol\{Y\}
	\end{align*}	
	where the second equality is because $X/Y$ and $Y/X$ are disjoint sets; the third equality is because $Y/X= \emptyset$ as $Y \subseteq X$; the fourth equality is because $Y \subseteq X$ so $Y \cap X= Y$ and thus $\mathds{1}_{X/Y}(x)= \mathds{1}_{X}(x)- \mathds{1}_{X \cap Y}(x) =  \mathds{1}_{X}(x)- \mathds{1}_{Y }(x)$ for all $x \in \R^n$, which implies $vol\{X/Y\}=vol\{X\}-vol\{Y\}$.
\end{proof}
In the next corollary we show under the metric $D_V$ the optimization problem \eqref{opt: general set countainment} is equivalent to the optimization problem,
\begin{align} \label{opt: general set countainment 2}
\min_{X \in C} \{ & vol\{X \} \}\\ \nonumber
& \text{subject to: } Y \subseteq X \nonumber
\end{align}
where $Y \subset \R^n$ and the constraint set is of the form $C \subset P(\R^n)$.
\begin{cor} \label{cor: opt involing D_V and vol are equivalent}
	If $X_1^*$ solves \eqref{opt: general set countainment} for the metric $D_V$ and $X_2^*$ solves \eqref{opt: general set countainment 2} then $X_1^*=X_2^*$.
\end{cor}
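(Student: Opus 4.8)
The plan is to reduce everything to Lemma~\ref{lem: D_V is related to vol}, exploiting the fact that the two optimization problems \eqref{opt: general set countainment} and \eqref{opt: general set countainment 2} share the identical feasible set $\mathcal{F} := \{X \in C : Y \subseteq X\}$ and differ only in their objective functions. First I would fix an arbitrary feasible point $X \in \mathcal{F}$. Since $Y \subseteq X$ holds everywhere on $\mathcal{F}$, the hypothesis of Lemma~\ref{lem: D_V is related to vol} is met, and it applies verbatim to yield $D_V(X,Y) = vol\{X\} - vol\{Y\}$.

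The key observation is then that $vol\{Y\}$ does not depend on the decision variable $X$; it is a fixed constant throughout the optimization. Hence, restricted to $\mathcal{F}$, the objective of \eqref{opt: general set countainment} equals the objective of \eqref{opt: general set countainment 2} shifted by the constant $-vol\{Y\}$. Because adding a constant to an objective changes neither the location of its minimizers nor the feasible set on which they are sought, the two problems have exactly the same set of optimal solutions. In particular, any $X_1^*$ solving \eqref{opt: general set countainment} also solves \eqref{opt: general set countainment 2}, and conversely, so $X_1^* = X_2^*$.

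The argument is short precisely because the substantive work was already carried out in Lemma~\ref{lem: D_V is related to vol}, so I do not expect a genuine obstacle. The one point deserving care is the reading of the conclusion $X_1^* = X_2^*$: what the constant-shift argument strictly establishes is that the two problems possess identical \emph{sets} of minimizers, so the equality is cleanest when the minimizer is unique (or is interpreted as an equality of optimal solution sets). I would flag this explicitly and emphasize that the shift by $-vol\{Y\}$ preserves every minimizer irrespective of uniqueness, which is exactly the conclusion the corollary requires.
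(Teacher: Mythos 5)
Your proposal is correct and follows essentially the same route as the paper: both arguments observe that the two problems share the feasible set $\{X \in C : Y \subseteq X\}$ and then invoke Lemma~\ref{lem: D_V is related to vol} to conclude the objectives differ only by the constant $vol\{Y\}$. Your added remark about interpreting $X_1^* = X_2^*$ as an equality of minimizer sets (absent a uniqueness assumption) is a fair point of care that the paper glosses over, but it does not change the argument.
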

\begin{proof}
	The set of feasible solutions, $\{X \in \R^n : X \in C \text{ and } Y \subset X\}$, for the optimization problem \eqref{opt: general set countainment} is equal to the set of feasible solutions for \eqref{opt: general set countainment 2}. Moreover Lemma \ref{lem: D_V is related to vol} shows that minimizing the objective function in \eqref{opt: general set countainment} is equivalent to minimizing the objective function in \eqref{opt: general set countainment 2} as the two functions only differ by a constant.
	\end{proof}

\subsection{How We Minimize the Volume of a Set}
Corollary \ref{cor: opt involing D_V and vol are equivalent} shows how the outer-approximation of a set, formulated in the optimization problem \eqref{opt: general set countainment}, is equivalent to minimizing the volume of the outer set, formulated in the optimization problem \eqref{opt: general set countainment 2}. However evaluating and minimizing the volume of a set is difficult. In this section we seek to make this problem tractable. Here we will show if the constraint set, $C$ in \eqref{opt: general set countainment 2}, is the set of ellipses then there exists an equivalent convex optimization problem. To formulate this convex optimization problem we first must understand the relationship between the determinant and volume.

The determinant can be understood as the ratio between the volumes of a set and a linear transformation of that set. To prove this property of the determinant one can use the formula for integration by substitution given next.

\begin{thm}[Theorem 7.26 \cite{Rudin_1987}] \label{thm: integration by sub}
	Let $U$ be an open set in $\R^n$ and $\phi : U \to \R^n$ an injective differentiable function with continuous partial derivatives, the Jacobian of which is nonzero for every $x \in U$. Then for any real-valued, compactly supported, continuous function $f$, with support contained in $\phi(U)$,
	\begin{equation} \label{eqn: int by sub}
	{\displaystyle \int _{\phi (U)}f(\mathbf {v} )\,d\mathbf {v} =\int _{U}f(\phi (\mathbf {u} ))\left|\det(\operatorname {D} \phi )(\mathbf {u} )\right|\,d\mathbf {u} }
	\end{equation}
	where $\det(\operatorname {D} \phi )(\mathbf {u} )$ denotes the determinant of the Jacobian matrix of the function $\phi$ at $\mathbf u$.
\end{thm}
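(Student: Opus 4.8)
The plan is to reduce the $n$-dimensional substitution formula to the one-dimensional case by induction on dimension, exploiting the fact that every $C^1$ diffeomorphism can locally be factored into elementary maps for which \eqref{eqn: int by sub} is transparent. The classical one-dimensional change of variables (itself a consequence of the fundamental theorem of calculus and the chain rule) serves as the base case.

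First I would verify the formula for two classes of elementary diffeomorphisms. The first class is \emph{primitive} maps, those of the form $\phi(u) = (u_1, \dots, u_{k-1}, g(u), u_{k+1}, \dots, u_n)$ that alter only the $k$-th coordinate. For such a map, Fubini's theorem reduces the integral over $\R^n$ to an iterated integral, and the inner one-dimensional integral in $u_k$ is handled by the classical one-dimensional rule; the hypothesis that the Jacobian is nonzero guarantees that $u_k \mapsto g(u)$ is a genuine injective substitution with nonvanishing derivative. Crucially, the Jacobian matrix of a primitive map is triangular, so $\det(\operatorname{D}\phi) = \partial g / \partial u_k$, which is precisely the factor produced by the one-dimensional rule, matching the $\left|\det(\operatorname{D}\phi)\right|$ in \eqref{eqn: int by sub}. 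The second class is \emph{flips}, i.e. transpositions of two coordinates, for which the formula is immediate since such maps preserve Lebesgue measure and have Jacobian determinant $\pm 1$.

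Next I would show that the collection of maps satisfying \eqref{eqn: int by sub} is closed under composition. This follows from the chain rule $\operatorname{D}(\phi \circ \psi)(u) = (\operatorname{D}\phi)(\psi(u))\,(\operatorname{D}\psi)(u)$ together with multiplicativity of the determinant, so that the $\left|\det\right|$ factors combine correctly when two substitutions are applied in succession. The structural heart of the argument is then a factorization lemma: in a neighborhood of each point of $U$, the map $\phi$ can be written as a composition of finitely many primitive maps and flips. This is where the three hypotheses — injectivity, continuity of the partial derivatives, and nonvanishing Jacobian — are all essential, since the construction peels off one coordinate at a time and repeatedly invokes the inverse function theorem to certify that each intermediate map is itself a local diffeomorphism with invertible derivative.

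Finally I would pass from this local statement to the global one. Because $\operatorname{supp} f$ is compact and contained in $\phi(U)$, it is covered by finitely many of the neighborhoods on which the factorization holds; taking a partition of unity subordinate to this cover, I would write $f = \sum_i f_i$ with each $f_i$ compactly supported inside a single neighborhood, apply the composed local result to each $f_i$, and sum by linearity of the integral. I expect the main obstacle to be the factorization lemma and the accompanying bookkeeping: one must verify that every intermediate map in the decomposition is injective with nonvanishing Jacobian on an honestly open set, and that the localization is compatible with the support condition $\operatorname{supp} f \subset \phi(U)$ so that no contribution is lost when the pieces are reassembled.
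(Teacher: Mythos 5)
The paper does not prove this statement at all: it is imported verbatim as Theorem 7.26 of the cited reference (Rudin, \emph{Real and Complex Analysis}), so there is no internal proof to compare against. Your outline is nonetheless a correct and complete plan for a standard proof --- it is essentially the argument Rudin gives in \emph{Principles of Mathematical Analysis} (Theorems 10.7 and 10.9): verify the formula for primitive mappings via Fubini plus the one-dimensional substitution rule, and for coordinate flips; observe closure under composition via the chain rule and multiplicativity of $\det$; invoke the local factorization of a $C^1$ map with nonvanishing Jacobian into primitives and flips (inverse function theorem); and globalize with a partition of unity, where global injectivity of $\phi$ is what lets you replace $\int_U f_i(\phi(u))\left|\det(\operatorname{D}\phi)(u)\right|du$ by the integral over the single chart $W_i$ without double counting. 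Two small points of precision: the Jacobian matrix of a primitive map is not triangular in general (the $k$-th row is all of $\nabla g$), but its determinant is still $\partial g/\partial u_k$ by cofactor expansion, so your conclusion stands; and the one-dimensional substitution step needs $u_k \mapsto g(u)$ to be monotone on the relevant interval, which follows from continuity and nonvanishing of $\partial g/\partial u_k$ after localizing. It is worth noting that the proof in the actually cited source is quite different: Rudin's Theorem 7.26 in \emph{Real and Complex Analysis} rests on the measure-theoretic identity $m(T(E))=\int_E \left|J_T\right|\,dm$, proved via differentiation of measures, which yields the formula for arbitrary measurable integrands; your more elementary route is restricted to continuous compactly supported $f$, but that is exactly the generality the paper's statement requires, so nothing is lost for the purposes of this paper.
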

We now prove the relationship between the volume and determinant by selecting particular functions in \eqref{eqn: int by sub}.
\begin{cor} \label{cor: general det ratio}
Let $U$ be an open set in $\R^n$ then,
\begin{equation} \label{eqn: general det}
|\det(A)| = \frac{ vol\{A x: x \in U \}}{vol\{U\}}.
\end{equation}
\end{cor}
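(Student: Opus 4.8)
The plan is to apply Theorem \ref{thm: integration by sub} with the linear map $\phi(x) = Ax$. First I would assume $A \in GL(n,\R)$, so that $\phi$ satisfies the hypotheses of the theorem: it is injective, infinitely differentiable with continuous partial derivatives, and its Jacobian matrix is the constant matrix $A$, whose determinant $\det(\operatorname{D}\phi)(u) = \det A$ is nonzero for every $u \in U$. The degenerate case $\det A = 0$ I would dispatch separately, noting that then $A(U)$ is contained in a proper subspace of $\R^n$ and hence has zero volume, so that both sides of \eqref{eqn: general det} vanish.

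Next I would substitute the indicator function $f = \mathds{1}_{\phi(U)}$ into \eqref{eqn: int by sub}. With this choice the left-hand side becomes $\int_{\phi(U)} \mathds{1}_{\phi(U)}(v)\,dv = vol\{Ax : x \in U\}$, while on the right-hand side $\phi(u) \in \phi(U)$ for every $u \in U$, so $\mathds{1}_{\phi(U)}(\phi(u)) = 1$ and the integral collapses to $|\det A| \int_U du = |\det A|\, vol\{U\}$. Equating the two expressions and dividing by $vol\{U\}$, which I take to be finite and nonzero, would yield \eqref{eqn: general det}.

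The main obstacle is that Theorem \ref{thm: integration by sub} requires $f$ to be continuous and compactly supported, whereas $\mathds{1}_{\phi(U)}$ is neither. To get around this I would replace the indicator by an approximating family of continuous, compactly supported functions: assuming $U$ (and hence $\phi(U)$) is bounded, I would choose continuous $f_k$ with $0 \le f_k \le \mathds{1}_{\phi(U)}$ and $f_k \uparrow \mathds{1}_{\phi(U)}$ pointwise, which is possible since $\phi(U)$ is open (e.g.\ via Urysohn-type bump functions supported inside $\phi(U)$). Applying \eqref{eqn: int by sub} to each $f_k$, and then passing to the limit via the monotone convergence theorem on both sides, recovers the identity for the indicator and completes the argument; on the right one uses that $f_k(\phi(u)) \uparrow 1$ for each $u \in U$. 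The only care needed is to keep every support inside $\phi(U)$ so the hypotheses of Theorem \ref{thm: integration by sub} hold at each stage, and for unbounded $U$ one would first exhaust $U$ by bounded open subsets.
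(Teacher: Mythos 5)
Your proposal takes essentially the same route as the paper: both apply Theorem \ref{thm: integration by sub} with $\phi(x)=Ax$ and $f=\mathds{1}_{\phi(U)}$ and then rearrange. In fact your version is more careful than the paper's, which substitutes the indicator function directly into \eqref{eqn: int by sub} even though it is neither continuous nor compactly supported; your monotone-convergence approximation by continuous compactly supported functions, together with the explicit treatment of the singular case $\det A=0$ and the standing assumption $A\in GL(n,\R)$ (which the corollary's statement leaves implicit), closes that gap.
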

\begin{proof}
Let us consider the function $\phi: U \to \R^n$ and $f: \R^n \to \R$	defined by $\phi(x)= Ax$ and $f(x)= \mathds{1}_{\phi(U)}(x)$ respectively. Applying this to \eqref{eqn: int by sub} we get,
\begin{align*}
\int \mathds{1}_{\phi(U)}(x) dx = \int \mathds{1}_{U}(x) |\det(A)| dx.
\end{align*}
Now by rearranging the above equality we get \eqref{eqn: general det}.
\end{proof}

In this paper we are interested in the special case of Corollary \ref{cor: general det ratio} when $U$ is a sublevel set. Specifically it can be shown for an invertible square matrix $A \in GL(n,\R)$ and a function $g: \R^n \to \R$, such that $L(g,1)$ is open, \eqref{eqn: general det} becomes,
\begin{equation} \label{eqn: det is ratio of volumes}
\det(A)= \frac{ vol\{L(g \circ A^{-1},1 )\}}{vol\{L(g,1)\}}.
\end{equation}
Equation \eqref{eqn: det is ratio of volumes} relates the volume of sublevel sets and the determinant of invertible matrices. We will use this equation to justify how optimization problems with determinants in the objective function minimize volumes of sublevel sets. To do this let us consider the optimization problem \eqref{opt: general set countainment 2} when the constraint set, $C$, is the set of ellipses,
\begin{align} \label{opt: general elipse optimization problem}
\min_{X \in \{\mcl E_M: M \in GL(n,\R)\}}  & vol\{X \}\\ \nonumber
& \text{subject to: }Y \subseteq X \nonumber
\end{align}
where $Y=\{b_1,....,b_m \}$.

Let us also consider the following optimization problem,
\begin{align} \label{opt: elipse optimization problem}
 \min_{A \in S^n_{++}} \{ & -\log \det A \}\\ \nonumber
& \text{subject to: } b_i^T A b_i \le 1 \text{ for } i \in \{1,...,m\} \nonumber
\end{align}
where $b_i \in \R^n$.

We will show in Lemma \ref{lem: opt minimizes elipses and constrains points} that the optimization problems \eqref{opt: general elipse optimization problem} and \eqref{opt: elipse optimization problem} are equivalent. Furthermore in Lemma \ref{lem: logdet is convex} we will show \eqref{opt: elipse optimization problem} is convex.

\begin{lem} \label{lem: opt minimizes elipses and constrains points}
Suppose $\mcl E_{M^*}$ solves \eqref{opt: general elipse optimization problem} and $A^{*}$ solves \eqref{opt: elipse optimization problem} then $A^{*}={M^*}^T M^*$.
\end{lem}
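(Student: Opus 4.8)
The plan is to reduce the geometric volume-minimization problem \eqref{opt: general elipse optimization problem} to the convex determinant problem \eqref{opt: elipse optimization problem} in two moves: first convert the volume of an ellipse into a determinant, and then change variables via $A = M^T M$.

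First I would express the volume of $\mcl E_M$ as a determinant. Taking $g(x) = x^T x$, its open unit sublevel set $L(g,1)$ is the open unit ball, call it $B$, and substituting $A = M^{-1}$ into \eqref{eqn: det is ratio of volumes} gives $g \circ M (x) = x^T M^T M x$, so that $L(g \circ M, 1) = \mcl E_M$ and hence $vol\{\mcl E_M\} = |\det M^{-1}|\, vol\{B\} = vol\{B\}/|\det M|$. Since $M$ and $DM$ define the same ellipse for any orthogonal $D$ (in particular a sign flip of one row), we may assume $\det M > 0$. The containment constraint $Y \subseteq \mcl E_M$ is exactly $b_i^T M^T M b_i \le 1$ for all $i$. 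Thus \eqref{opt: general elipse optimization problem} is equivalent to maximizing $|\det M|$ over the feasible set $\{M \in GL(n,\R): b_i^T M^T M b_i \le 1\}$.

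Next I would change variables through $A = M^T M$. This map sends $GL(n,\R)$ onto $S^n_{++}$ (every $A \in S^n_{++}$ factors as $A = M^T M$ with, e.g., $M = A^{1/2}$), it carries the constraints $b_i^T M^T M b_i \le 1$ exactly onto $b_i^T A b_i \le 1$, and it satisfies $|\det M| = \sqrt{\det(M^T M)} = \sqrt{\det A}$. Because maximizing $\sqrt{\det A}$ over the feasible set is the same as maximizing $\log \det A$, i.e.\ minimizing $-\log \det A$, the feasible $A$ that maximizes the volume objective is precisely a minimizer of \eqref{opt: elipse optimization problem}. Concretely, for any feasible $A$ I would factor $A = M^T M$ with $M$ feasible; optimality of $\mcl E_{M^*}$ then gives $vol\{B\}/\sqrt{\det A} = vol\{\mcl E_M\} \ge vol\{\mcl E_{M^*}\} = vol\{B\}/\sqrt{\det({M^*}^T M^*)}$, hence $\det A \le \det({M^*}^T M^*)$, so ${M^*}^T M^*$ maximizes $\det A$ and therefore minimizes $-\log \det A$ over the feasible set.

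The main obstacle is that $M \mapsto M^T M$ is not injective, so one cannot simply invert the correspondence between the two problems; this is resolved by observing that the volume objective factors through $A = M^T M$, together with uniqueness of the minimizer. That uniqueness — which forces $A^* = {M^*}^T M^*$ rather than merely some minimizer — follows because $-\log \det$ is strictly convex on $S^n_{++}$, while the feasible set $\{A \in S^n_{++}: b_i^T A b_i \le 1\}$ is convex, as used in Lemma \ref{lem: logdet is convex}. A strictly convex function has at most one minimizer over a convex set, so the minimizer is unique and equals ${M^*}^T M^*$, giving $A^* = {M^*}^T M^*$.
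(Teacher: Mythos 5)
Your proof is correct and follows essentially the same route as the paper's: express $vol\{\mcl E_M\}$ as $vol\{B\}/\lvert\det M\rvert$ via the change-of-variables formula, pass to $A = M^T M$, and observe that the constraints and the (monotonically transformed) objectives correspond. You in fact go a step further than the paper, which never explains why $A^*$ must equal ${M^*}^T M^*$ rather than some other minimizer of \eqref{opt: elipse optimization problem}; your strict-convexity/uniqueness argument for $-\log\det$ over the convex feasible set closes that gap cleanly.
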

\begin{proof}
 Let us denote $A ={M^*}^T M^*$. We first show that $A$ is feasible for \eqref{opt: elipse optimization problem}. Since by the constraints of \eqref{opt: general elipse optimization problem} we have $\{b_1,....,b_m \} \subset \mcl E_{M^*}$ it follows $b_i^T {M^*}^T M^* b_i \le 1 \text{ for } i \in \{1,...,m\}$. That is $b_i^T A b_i \le 1 \text{ for } i \in \{1,...,m\}$. Moreover it is clear $A \in S^n_{++}$ since $A$ is the matrix multiplication of $M^*$ with itself and thus $A$ is feasible for \eqref{opt: elipse optimization problem}.

 Suppose $A^{*}$ solves \eqref{opt: elipse optimization problem}. Since $A^* \in S^n_{++}$ there exists $M \in GL(n, \R)$ such that $A^*= M^T M$. We will now show $M$ is feasible for \eqref{opt: general elipse optimization problem}. By the constraints of \eqref{opt: elipse optimization problem} we have we have $b_i^T A b_i \le 1 \text{ for } i \in \{1,...,m\}$ so $b_i^T {M}^T M b_i \le 1 \text{ for } i \in \{1,...,m\}$. Thus it now follows $\{b_1,....,b_m \} \subset \mcl E_{M}$.

 We will now show that minimizing the objective function of \eqref{opt: general elipse optimization problem} is equivalent to minimizing the objective function of \eqref{opt: elipse optimization problem} over the same constraint set. By writing the n-dimensional ball in the form $B(0,1):=\{x : x^T x \le 1 \}=L(g,1)$ where $g(x)=x^T x$ we can write $\mathcal{E}_{M}= L( g \circ M , 1)$ for $M \in GL(n,\R)$. It can now be shown the objective function of \eqref{opt: general elipse optimization problem} is such that,
\begin{align*}
vol\{\mcl E_{M} \} & = \det({M}^{-1}) vol\{B(0,1)\} \\
& = \frac{\pi^{\frac{n}{2}} }{\Gamma(\frac{n}{2} +1)} \sqrt{\det({A}^{-1})},
\end{align*}

\noindent where the first equality follows by properties of the determinant \eqref{eqn: det is ratio of volumes}; the second equality follows because $vol\{B(0,1)\}= \frac{\pi^{\frac{n}{2}} }{\Gamma(\frac{n}{2} +1)}$, where $\Gamma$ is the Gamma function, and commutative properties of the determinant, where $A=M^TM$. Thus minimizing $vol\{\mcl E_M \}$ is equivalent to minimizing $\sqrt{\det(A^{-1})}$ which is equivalent to minimizing the objective function in \eqref{opt: elipse optimization problem} since $\log \{ \sqrt{\det( A^{-1} )} \}=0.5 \log \{ (\det A)^{-1} \} = -0.5\log \det A$.
\end{proof}
The optimization problem \eqref{opt: elipse optimization problem} is a convex optimization problem as the constraints are affine in the decision variable, $A \in S_{++}^n$, and the objective function is convex; stated in the following Lemma.
\begin{lem}[\cite{Boyd_2004}] \label{lem: logdet is convex}
	The function $f: S^n_{++} \to \R$ given by $f(X)=-\log \det (X)$ is convex.
\end{lem}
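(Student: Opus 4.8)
The plan is to establish convexity via the standard reduction to one-dimensional restrictions: since $S^n_{++}$ is a convex set, $f$ is convex if and only if for every $X \in S^n_{++}$ and every symmetric direction $V$ the scalar function $g(t) := f(X + tV) = -\log\det(X+tV)$ is convex on the open interval $I := \{t \in \R : X + tV \in S^n_{++}\}$, which contains $0$. First I would verify that $I$ really is an interval; this follows because $S^n_{++}$ is convex and the intersection of a convex set with a line is an interval. It then suffices to show $g''(t) \ge 0$ for all $t \in I$.

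The key step is a simultaneous diagonalization. Writing $X^{1/2}$ for the symmetric positive definite square root of $X \in S^n_{++}$, I would factor
\begin{equation*}
\det(X + tV) = \det(X)\, \det\!\left(I + t\, X^{-1/2} V X^{-1/2}\right).
\end{equation*}
The matrix $W := X^{-1/2} V X^{-1/2}$ is symmetric, hence orthogonally diagonalizable with real eigenvalues $\lambda_1, \dots, \lambda_n$, so that $\det(I + tW) = \prod_{i=1}^{n} (1 + t\lambda_i)$. Taking logarithms yields $g(t) = -\log\det X - \sum_{i=1}^{n} \log(1 + t\lambda_i)$, where each summand is defined precisely on the interval $I$, on which every factor $1 + t\lambda_i$ remains positive.

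Finally I would differentiate term by term to obtain $g'(t) = -\sum_{i} \lambda_i / (1 + t\lambda_i)$ and $g''(t) = \sum_{i} \lambda_i^2 / (1 + t\lambda_i)^2 \ge 0$, so $g$ is convex on $I$. Since $X$ and $V$ were arbitrary, $f$ is convex on $S^n_{++}$, as claimed.

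I do not anticipate a genuine obstacle here, as this is the textbook argument cited from \cite{Boyd_2004}; the only points requiring care are the square-root factorization used for the simultaneous diagonalization and the bookkeeping that the interval $I$ is exactly the set of $t$ for which each $1 + t\lambda_i$ stays positive, so that the logarithms and their first two derivatives are well defined throughout.
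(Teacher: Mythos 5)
Your proof is correct and is precisely the standard restriction-to-lines argument from the cited reference \cite{Boyd_2004}; the paper itself supplies no proof, delegating entirely to that citation, so your write-up simply makes explicit the argument the paper is invoking. No changes needed.
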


\subsection{Heuristic Volume Minimization of Sublevel Sets of SOS Polynomials}
In the optimization problem \eqref{opt: general elipse optimization problem} the decision variable, $X \subset \R^n$, is constrained to be an ellipse. Equivalently we can also think of $X$ being constrained to be the sublevel set of a quadratic polynomial, of the form $L(x^T Ax,1)$ where $A \in S^n_{++}$. Naturally we would like to expand the type of sets that our outer approximation can take. One way to do this is to expand the constraint set to include sublevel sets created by non-quadratic polynomials.

Inspired by \eqref{opt: elipse optimization problem} we next give an optimization problem that heuristically minimizes the distance between a set $Y= \{b_1,..., b_m\}$ and a sublevel set of the $L(V,1)$ where $V(x)=z_d(x)^T A z_d(x)$, $A \in S^N_{++}$ and $N=\dim \{z_d\} $.

\begin{align} \label{opt: monomials to constrain points}
 \min_{A \in S^N_{++}} \{ & -\log \det A \}\\ \nonumber
& \text{subject to: } z_d(b_i)^T A z_d(b_i) \le 1 \text{ for } i \in \{1,...,m\}. \nonumber
\end{align}
We see that the optimization problem \eqref{opt: elipse optimization problem} is a special case of \eqref{opt: monomials to constrain points} when $d=1$. However allowing for $d>1$ the 1-sublevel set of $V(x)=z_d(x)^T A z_d(x)$ is able to form more complicated shapes.

To understand heuristically why a solution of \eqref{opt: monomials to constrain points} can construct a solution close to \eqref{opt: general set countainment 2} we note that increasing the eigenvalues of $A \in S_{++}^n$ also increases the value of the function $V(x) = z_d(x)^T A z_d(x)$ at every $x \in \R^n$. This results in less $x \in \R^n$ such that $V(x) <1$. Thus the volume of sublevel set $L(V,1)$ is reduced.


As argued in \cite{Ahmadi_2017} there is another way to see how the optimization problem \eqref{opt: monomials to constrain points} heuristically minimizes the volume of the 1-sublevel set of $V(x)=z_d(x)^T A z_d(x)$, while constraining the 1-sublevel set to contain $\{b_i\}_{i=\{1,...,m\}}$. Let us define,
\begin{align*}
& \mcl M_A = \{x \in \R^n: z_d(x)^T A z_d(x) \le 1 \}\\
& T_1= z_d(\R^n):= \{z_d(x) \in \R^N: x \in \R^n\}\\
& T_2= \{y \in \R^N : y^T A y \le 1 \}
\end{align*}
We now have the identity $z_d( \mcl M_A)=T_1 \cap T_2$; where $z_d( \mcl M_A)= \{z_d(x): x \in \mcl M_A\}$. The optimization problem \eqref{opt: monomials to constrain points} constrains $b_i \in \mcl M_A$ for all $i \in \{1,...,m\}$ and by Lemma \ref{lem: opt minimizes elipses and constrains points} minimizes $vol\{T_2\}$. The hope is that minimizing $vol\{T_2\}$ minimizes $vol\{T_1 \cap T_2\}$ and hence minimizes the set $\mcl M_A$ which is the preimage of $T_1 \cap T_2$ under the map $z_d$.

In the next section, we will extend this approach to outer semialgebraic set representations of unions of semialgebriac sets. We modify the optimization problem \eqref{opt: monomials to constrain points} to a problem that is solved by a matrix $A \in S_{++}^N$ that defines a sublevel set $L(z_d(x)^T A z_d(x),1)$ that is constrained to contain a union of some semi-algebraic sets. Moreover the sublevel set $L(z_d(x)^T A z_d(x),1)$ heuristically has minimum volume as a $\log \det$ objective function is included.
\begin{align} \label{opt: monomials to constrain sets}
 \min_{A \in S^N_{++}} \{ & -\log \det A \}\\ \nonumber
& \text{subject to: } z_d(x)^T A z_d(x) \le 1 \text{ for } x \in S \nonumber
\end{align}
Where $S= \cup_{i=1}^{m}S_i$, $S_i=\{x \in \R^n: g_{i,1}(x) \le 0,....,g_{i,l_i}(0) \le 0 \}$ and $N=\dim\{z_d\}$.

\subsection{Tractable SOS Tightening}
The optimization problem \eqref{opt: monomials to constrain sets} is currently not a tractable optimization problem. This is because determining whether a polynomial is globally positive ($f(x)>0$ $\forall x \in \R^n$) is NP-hard \cite{Blum_1998}. However it can be shown testing if a polynomial is Sum-of-Squares (SOS) is equivalent to solving a semidefinite program (SDP). Although not all positive polynomials are SOS, this gap can be made arbitrarily small \cite{Hilbert_1888}.

To avoid cumbersome notation we will not state the SDP resulting from the SOS tightening explicitly. However the constraints we give can be enforced using software such as SOSTOOLS \cite{Prajna_2002} that will reformulate the problem as an SDP. Using efficient primal-dual interior point methods for SDP's we are able to solve such proposed problems \cite{Nesterov_1994}.

We now give necessary and sufficient conditions for testing if a polynomial is positive over a semialgebriac set.

\begin{thm}[\cite{Putinar_1993}] \label{thm: Psatz}
Consider the semialgebriac set $X = \{x \in \R^n: g_i(x) \ge 0 \text{ for } i=1,..k\}$. Further suppose $\{x  \in \R^n : g_i(x) \ge 0 \}$ is compact for some $i \in \{1,..,k\}$. If the polynomial $f: \R^n \to \R$ satisfies $f(x)>0$ for all $x \in X$, then there exists SOS polynomials $\{s_i\}_{i \in \{1,..,m\}} \subset \sum_{SOS}$ such that,
\begin{equation*}
f- \sum_{i=1}^m s_ig_i \in \sum_{SOS}.
\end{equation*}
\end{thm}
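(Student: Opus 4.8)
The plan is to prove the statement by contradiction, combining a Hahn--Banach separation argument with the solution of the moment problem; this is the route taken in Putinar's original work. Define the \emph{quadratic module} generated by the constraints,
\[
M = \Bigl\{ s_0 + \sum_{i=1}^k s_i g_i : s_0, s_1, \dots, s_k \in \sum_{SOS} \Bigr\} \subset \R[x],
\]
which is a convex cone containing $1$. The desired conclusion $f - \sum_i s_i g_i \in \sum_{SOS}$ is exactly the assertion $f \in M$, so it suffices to show that every $f$ with $f > 0$ on $X$ lies in $M$. The compactness hypothesis is used to secure the \emph{Archimedean} property: there is $N > 0$ with $N - \norm{x}^2 \in M$, and this quantitative bound is the ingredient that makes the whole argument run.

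First I would suppose, for contradiction, that $f \notin M$. Since $M$ is a convex cone missing $f$, an Eidelheit/Hahn--Banach separation theorem on the space $\R[x]$ produces a nonzero linear functional $L : \R[x] \to \R$ with $L(p) \ge 0$ for all $p \in M$ and $L(f) \le 0$. In particular $L$ is nonnegative on every SOS polynomial and on every product $s\, g_i$ with $s \in \sum_{SOS}$. Using $1 \in M$ together with the Archimedean property I would normalize $L(1) = 1$ and, more importantly, derive uniform bounds of the form $\abs{L(x^\alpha)} \le C^{\abs{\alpha}}$; such bounds certify that the moment sequence $(L(x^\alpha))_\alpha$ is that of a genuine, compactly supported measure.

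The core step is to realize $L$ as integration against a positive Borel measure supported in $X$. Via a GNS-type construction one forms the inner product $\langle p, q \rangle = L(pq)$ on $\R[x]$ modulo its kernel, completes to a Hilbert space $H$, and studies the multiplication operators $(M_{x_j} p)(x) = x_j p(x)$. The Archimedean bound forces each $M_{x_j}$ to extend to a bounded self-adjoint operator, and these operators commute, so the joint spectral theorem yields a projection-valued measure and hence a scalar measure $\mu$ with $L(p) = \int p \, d\mu$ for all $p$. The inequalities $L(s\, g_i) \ge 0$ for all SOS $s$ translate into $g_i(M_x) \succeq 0$ in the operator sense, which confines the joint spectrum, and therefore $\mathrm{supp}\,\mu$, to $\{x : g_i(x) \ge 0\}$ for each $i$; thus $\mu$ is supported in $X$.

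With the measure in hand the contradiction is immediate: $L(f) = \int_X f \, d\mu$, and since $f > 0$ on $X$ while $\mu$ is a nonzero positive measure (because $\mu(\R^n) = L(1) = 1$) supported in $X$, we obtain $L(f) > 0$, contradicting $L(f) \le 0$. Hence $f \in M$, which is the claimed representation. The hard part is the operator-theoretic realization of $L$ as a measure supported exactly on $X$: establishing boundedness and commuting self-adjointness of the multiplication operators from the Archimedean bound, and pinning the support to $X$ using the localized positivity $L(s\, g_i) \ge 0$. This is precisely where the compactness/Archimedean hypothesis is indispensable, since without it the separating functional need not correspond to any measure and the representation can fail.
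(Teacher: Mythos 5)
This theorem is imported from \cite{Putinar_1993}; the paper states it without proof, so there is no internal argument to compare yours against. What you have written is an outline of Putinar's original functional-analytic proof, and as an outline it is essentially right: separating $f$ from the quadratic module $M$ by a linear functional $L$, realizing $L$ as integration against a positive measure supported in $X$ via the GNS construction and the joint spectral theorem for the commuting bounded multiplication operators, and contradicting $L(f)\le 0$ with $f>0$ on $X$ are exactly the steps of the standard argument.

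The one place where you assert more than you justify is the opening claim that the compactness hypothesis ``secures'' the Archimedean property $N-\norm{x}^2\in M$. That implication is itself a theorem, not an observation: compactness of $X$ alone does \emph{not} make the quadratic module Archimedean (the Jacobi--Prestel examples show this), and what rescues the statement as given is the stronger hypothesis that a \emph{single} $g_i$ has compact positivity set. For one constraint the quadratic module coincides with the preordering it generates, so Schm\"udgen's Positivstellensatz applied to $\{x:g_i(x)\ge 0\}$ yields $N-\norm{x}^2\in M(g_i)\subseteq M$; this is how Putinar himself obtains the Archimedean property, and you must either invoke Schm\"udgen here or restate the hypothesis in Archimedean form. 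Two smaller points that a full write-up should make explicit: the separation step needs the Eidelheit form of Hahn--Banach, which requires $1$ to be an algebraic interior point of $M$ (again supplied by the Archimedean property, since $M$ has empty interior in any reasonable topology on $\R[x]$); and at the end you need $\mu\neq 0$, which follows because $L(1)=0$ combined with Cauchy--Schwarz and the Archimedean moment bounds would force $L\equiv 0$, contradicting $L\neq 0$.
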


We now propose a tightening of \eqref{opt: monomials to constrain sets} to a convex SOS program:
\begin{align} \label{opt: SOS to contain sets}
& \min_{A \in S^N_{++}} \{ -\log \det A \} \quad \text{subject to,} \\ \nonumber
 &(1- z_d^T A z_d) - \sum_{j=1}^{l_1} s_{i,j}g_{i,j} \in \sum_{sos}  \quad \forall i\in \{1,...,m\} \\ \nonumber
& \hspace{1.5cm} s_{i,j} \in \sum_{sos} \quad \forall i,j
\end{align}
where $N=\dim\{z_d\}$.

Using Theorem \ref{thm: Psatz} we see the constraints of this optimization problem ensure $ 1- z_d(x)^T A z_d(x) \ge 0 \text{ for } x \in S= \cup_{i=1}^{m}\{x \in \R^n: g_{i,1}(x) \le 0,....,g_{i,l_i}(0) \le 0 \}$. That is any solution of \eqref{opt: SOS to contain sets} is feasible for \eqref{opt: monomials to constrain sets}.

Moreover by adding the constraint $\nabla^2 (z_d^T A z_d) \in \sum_{SOS}$ to \eqref{opt: SOS to contain sets} we can ensure the function $V(x)=z_d(x)^T A z_d(x)$ is convex and thus its 1-sublevel set is also convex.

\section{Numerical examples: Representing the Union of Semialgebriac Sets as a Single Sublevel Set} \label{sec: Numerical examples of set containment}
Next we will give two numerical examples that show for a given union of semi-algebraic sets, $S$, we can use the SOS program \eqref{opt: SOS to contain sets} to find a function $V: \R^n \to \R$ such that $L(V,1)$ is an outer approximation for $S$. Moreover, as described at the end of the previous section, we are able to constrain $L(V,1)$ to be convex. For these examples \eqref{opt: SOS to contain sets} was solved using SOSTOOLS \cite{Prajna_2002}, to reformulate the problem into an SDP, and SDPT3 \cite{sdpt3_2003}, to solve the resulting SDP.

\begin{figure} 	
	\flushleft
	\includegraphics[scale=0.56]{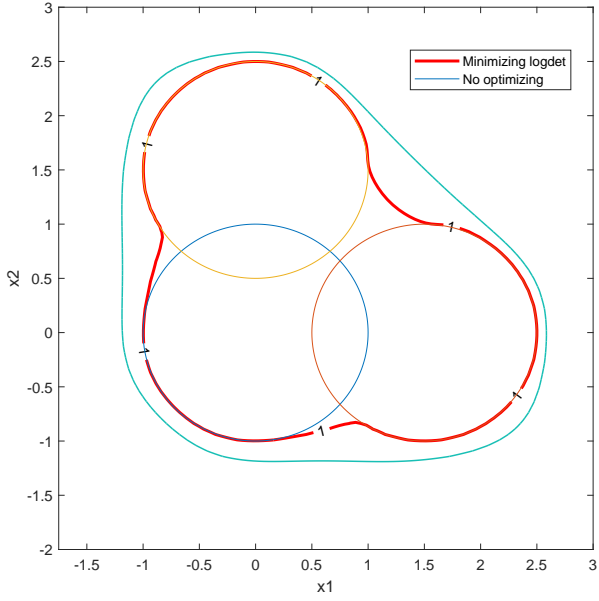}
	\caption{A non-convex outer set approximation of the union of three overlapping semialgebriac sets.} \label{fig:non convex balls}
\end{figure}

	\begin{figure} 	
				\flushleft
		\includegraphics[scale=0.55]{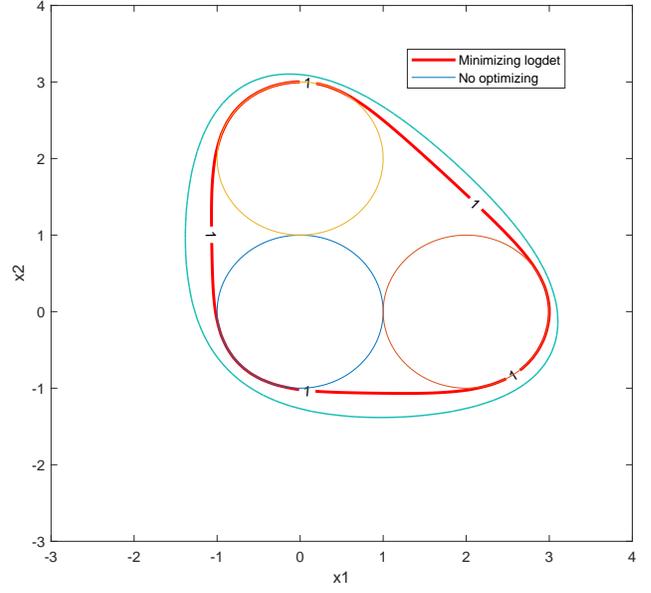}
		\caption{A convex outer set approximation of the union of three semialgebriac sets.} \label{fig:convex balls}
	\end{figure}

Figure \ref{fig:non convex balls} shows the output of the SOS program \eqref{opt: SOS to contain sets} for $d=4$ when we are trying to contain $S= \cup_{i=1}^{3}S_i$ where,
\begin{align*}
S_1 & =\{(x_1,x_2): x_1^2 + x_2^2 \le 1\}\\
S_2 & =  \{(x_1,x_2): (x_1-1.5)^2 + (x_2)^2 \le 1 \} \\
S_3 & =\{ (x_1,x_2): (x_1)^2 + (x_2-1.5)^2 \le 1 \}.
\end{align*}

Figure \ref{fig:convex balls} shows the output of the SOS program \eqref{opt: SOS to contain sets} with an added convexity constraint for $d=3$ when we are trying to contain $S= \cup_{i=1}^{3}S_i$ where
\begin{align*}
S_1 & =\{(x_1,x_2): x_1^2 + x_2^2 \le 1\}\\
S_2 & =  \{(x_1,x_2): (x_1-2)^2 + (x_2)^2 \le 1 \} \\
S_3 & =\{ (x_1,x_2): (x_1)^2 + (x_2-2)^2 \le 1 \}.
\end{align*}

In our experience as $d$ increases we are able to get better outer approximations of $S$. However for large $d$ numerical errors may dominate.

\section{Outer Approximation of Attractors} \label{sec: outeer approx of attractors}
 In this section we would like to find an outer approximation of an attractor of a dynamical system. We do this by considering an optimization problem of the form \eqref{opt: general set countainment}. As in Section \ref{Section: Outer Approximation of sets} we propose a convex optimization problem, similar to \eqref{opt: SOS to contain sets}. Unlike in \eqref{opt: SOS to contain sets} Lyapunov theory is required to ensure the attractor is contained in our sublevel set approximation.

\subsection{Background: Nonlinear Ordinary Differential Equations}
In this paper we are interested in dynamical systems described by ODE's of the form:
\begin{align} \label{eqn: ODE}
\dot{x}(t)=f(x(t))\\ \nonumber
x(0)=x_0
\end{align}
where $f:\R^n \to \R$ and $x_0 \in \R^n$.\\
Throughout this paper we will assume the existence and uniqueness of solutions of ODE's of the form \eqref{eqn: ODE}.

\begin{defn}
	We say $\phi: \R^n \times \R^+ \to \R^n$ is the \textit{solution map} for \eqref{eqn: ODE} if $\frac{\delta \phi(x,t)}{\delta t}= f(\phi(x,t))$ and $\phi(x,0)=x$ for all $x \in R^n$. Moreover for a set $U \subset \R^n$ we denote $\phi(U,t):=\{\phi(x,t): x \in U\} \subset \R^n$.
\end{defn}

\begin{defn}
For an ODE of the form \eqref{eqn: ODE} we say the set $U \subset \R^n$ is an \textit{invariant set} if $x_0 \in U$ implies $\phi(x_0,t) \in U$ for all $t\ge 0$.
\end{defn}

\begin{defn}
	For an ODE of the form \eqref{eqn: ODE} we say $x_0\in \R^n$ is a \textit{periodic initial condition} if $\exists T>0$ such that,
	\begin{equation*}
	\phi(x_0, t+T)=\phi(x_0,t) \quad \forall t \ge 0.
	\end{equation*}
	Moreover we say the set $\mcl L \subset \R^n$ is a  \textit{periodic orbit} if there exists a periodic initial condition $x_0 \in \R^n$ such that,
	\begin{equation*}
	\mcl L = \{ \phi(x_0,t) \in \R^n : t\ge 0\}.
	\end{equation*}
\end{defn}
The next theorem shows that in two dimensional systems invariant sets must contain either stable equilibrium points or periodic orbits. However this is not the case for higher order systems where trajectories can take non-periodic chaotic paths.
\begin{thm}[Poincare-Bendixson Criterion] \label{thm: Poincare-Bendixson Criterion}
	Consider a second order autonomous system represented by an ODE of the form:
	\begin{align} \label{eqn: 2d system}
	\dot{x}_1(t) = f_1(x_1(t),x_2(t))\\ \nonumber
	\dot{x}_2(t) = f_2(x_1(t),x_2(t))
	\end{align}
	where $f_i: \R \times \R \to \R$ for $i =1,2$.\\
	Suppose the set $M \subset \R^n$ is a closed bounded invariant set for the above ODE that contains no stable equilibrium points. Then $M$ contains a periodic orbit.
\end{thm}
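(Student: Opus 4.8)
The plan is to study the $\omega$-limit set of a trajectory confined to $M$ and to exploit the special topology of the plane. First I would fix any $x_0 \in M$ and consider the forward orbit $\gamma^+ = \{\phi(x_0,t) : t \ge 0\}$. Because the system is planar and $M$ is closed, bounded and invariant, this orbit remains in the compact set $M$ for all $t \ge 0$, so its $\omega$-limit set $\omega(x_0) = \{y \in \R^2 : \exists\, t_k \to \infty \text{ with } \phi(x_0,t_k) \to y\}$ is nonempty, and by standard arguments it is compact, connected, invariant, and contained in $M$ (since $M$ is closed and invariant). The theorem then reduces to showing that $\omega(x_0)$ is a periodic orbit, because any such orbit lies in the invariant set $M$.

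The crucial planar ingredient is the monotonicity of crossings of a local transversal, that is, a short segment $\Sigma$ on which the vector field $f$ is nowhere tangent. Applying the Jordan Curve Theorem to the closed curve formed by an arc of a single trajectory between two consecutive crossings of $\Sigma$ together with the connecting sub-segment of $\Sigma$, I would show that the successive intersection points of any one orbit with $\Sigma$ advance monotonically along $\Sigma$. The immediate consequence I would then extract is that $\omega(x_0)$ meets any such transversal $\Sigma$ in at most one point.

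With these tools in hand, I would pick $y \in \omega(x_0)$ and consider its own limit set $\omega(y) \subseteq \omega(x_0)$, again nonempty; choosing $z \in \omega(y)$ and, provided $z$ is not an equilibrium, erecting a transversal $\Sigma$ through $z$, the orbit through $y$ accumulates at $z$ and hence crosses $\Sigma$ infinitely often. By the monotonicity step all of these crossings must coincide, which forces the orbit through $y$ to return to the same point of $\Sigma$ and therefore to be periodic. A final use of the at-most-one-crossing property identifies $\omega(x_0)$ with this periodic orbit, delivering the conclusion.

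The main obstacle is guaranteeing that no equilibrium is encountered in this construction, and this is exactly where the hypothesis that $M$ contains no stable equilibrium points must be invoked. The delicate point is that if $\omega(x_0)$ contained an equilibrium, then by the connectedness and invariance of $\omega(x_0)$ together with the transversal argument the trajectory could only approach that equilibrium, making it locally attracting and hence contradicting the absence of stable equilibria in $M$. Turning this dichotomy into a rigorous exclusion of limit sets built from equilibria joined by connecting orbits is the subtle part of the argument, and is the step I expect to require the most care.
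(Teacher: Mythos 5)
The paper does not actually prove this statement: it is quoted as the classical Poincar\'e--Bendixson criterion (in the form given in standard nonlinear systems texts) and used as a black box, so there is no in-paper argument to compare yours against. Your outline is the standard proof of Poincar\'e--Bendixson, and the machinery you assemble --- compactness, connectedness and invariance of $\omega(x_0)$ inside the compact invariant set $M$, transversal segments, the Jordan curve theorem giving monotone crossings, hence at most one intersection of $\omega(x_0)$ with any transversal, and the $z \in \omega(y) \subseteq \omega(x_0)$ trick to manufacture a periodic orbit --- is all correct and is exactly how one proves the result under the right hypotheses.

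The genuine gap is the step you yourself flag: excluding equilibria from $\omega(x_0)$ using only the hypothesis that $M$ contains no \emph{stable} equilibria. Your proposed contradiction --- that an equilibrium in $\omega(x_0)$ would be ``locally attracting'' --- does not follow. Membership of an equilibrium $p$ in the $\omega$-limit set of one trajectory implies neither Lyapunov stability nor attractivity of $p$: a saddle is unstable, yet it is the $\omega$-limit of every point on its stable manifold, and more importantly it can sit inside a homoclinic or heteroclinic cycle that \emph{is} the $\omega$-limit set of an open set of trajectories (the full Poincar\'e--Bendixson trichotomy explicitly allows $\omega(x_0)$ to be a union of equilibria and connecting orbits). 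Indeed, with saddles permitted the conclusion itself can fail: take $M$ to be the closed region bounded by an attracting homoclinic loop of a saddle, containing a single unstable focus in its interior; $M$ is compact, invariant, has no stable equilibria, and contains no periodic orbit. So the gap cannot be closed for the statement as literally written. The standard hypothesis that makes your argument go through (and the form in which the ``Poincar\'e--Bendixson criterion'' is usually stated) is that $M$ contains no equilibria at all, or only isolated equilibria whose Jacobian has all eigenvalues with strictly positive real part; such points admit a neighborhood that every nearby trajectory eventually leaves permanently, which is what actually lets you rule them out of $\omega(x_0)$ and complete the proof. You should either strengthen the hypothesis accordingly or replace the ``locally attracting'' claim with that repelling-neighborhood argument.
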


	\begin{defn}
	We say $A \subset \R^n$ is an \textit{attractor set} for the ODE \eqref{eqn: ODE} if for any initial condition $x_0 \in \R^n$, there exist a $T>0$ such that $\phi(x_0,t) \in A$ for all $t > T$. Furthermore we say an attractor is a  \textit{minimal attractor} if it has no proper subsets that are also attractors.
\end{defn}

Using Theorem \ref{thm: Poincare-Bendixson Criterion} we can deduce the only attractor sets possible in two dimensional systems are equilibrium points or periodic orbits. However, higher dimensional systems can posses attractors that contain non-periodic (chaotic) trajectories. The methods proposed in this paper can find outer estimates of both attractors that exhibit periodic behavior, such as the limit cycle in the Van der Poll oscillator, as well as chaotic (strange) attractors, such as the Lorenz attractor.

\subsection{How We Compute an Outer Approximate of an Attractor}
In this section we will derive a heuristic algorithm for computing outer approximations of attractor sets. We will formulate this problem as an optimization problem in the form of \eqref{opt: general set countainment} and use the heuristic methods developed in Section \ref{Section: Outer Approximation of sets}. That is, for a bounded minimal attractor, $A$, of some ODE \eqref{eqn: ODE} we would like to solve

\begin{align} \label{opt: general attractor}
\min_{V \in \sum_{SOS}} \{ & D_V(A,L(V,1)) \}\\ \nonumber
& \text{subject to: } A \subseteq L(V,1). \nonumber
\end{align}

Unlike in Section \ref{Section: Outer Approximation of sets} we don't actually know the form of the set we are trying to approximate. However, in Theorem \ref{thm: Lyapunov invariant sets}, for systems with minimal attractors we will give Lyapunov type conditions for $L(V,1)$ to contain the minimal attractor. We first give some preliminary results used in the proof of the theorem. 

\begin{lem} \label{lem: min attractor contained in attractors}
	Suppose an ODE of the form \eqref{eqn: ODE} has a minimal attractor $A \subset \R^n$. Then, if $B \subset \R^n$ is an attractor for \eqref{eqn: ODE} we must have $A \subseteq B$.
\end{lem}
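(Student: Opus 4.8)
The plan is to exploit the minimality of $A$ by showing that the intersection $A \cap B$ is itself an attractor, and then to invoke the definition of a minimal attractor to conclude that $A \cap B = A$, which is precisely $A \subseteq B$.

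First I would verify that $A \cap B$ satisfies the defining property of an attractor. Fix an arbitrary initial condition $x_0 \in \R^n$. Since $A$ is an attractor there exists $T_A > 0$ with $\phi(x_0,t) \in A$ for all $t > T_A$, and since $B$ is an attractor there exists $T_B > 0$ with $\phi(x_0,t) \in B$ for all $t > T_B$. Setting $T = \max\{T_A, T_B\}$, it follows that $\phi(x_0,t) \in A \cap B$ for all $t > T$. As $x_0$ was arbitrary, $A \cap B$ meets the definition of an attractor; note in particular that this forces $A \cap B \neq \emptyset$, since solutions exist for every initial condition and must eventually lie in $A \cap B$.

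Next I would apply minimality. By construction $A \cap B \subseteq A$, and we have just shown $A \cap B$ is an attractor. Because $A$ is a minimal attractor it admits no proper subset that is also an attractor, so $A \cap B$ cannot be a proper subset of $A$; therefore $A \cap B = A$. This equality is exactly the statement $A \subseteq B$, which completes the argument.

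The argument is short, and the only step requiring any care is the first one, namely confirming that the intersection of two attractors is again an attractor. The subtlety there lies solely in handling the two (generally different) transient times $T_A$ and $T_B$ uniformly in $x_0$, which taking the maximum resolves immediately, so I do not anticipate a genuine obstacle. The one point worth stating explicitly is that nonemptiness of $A \cap B$ is automatic from the existence of solutions, so that $A \cap B$ is a legitimate attractor rather than the empty set.
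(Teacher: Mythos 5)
Your proposal is correct and follows essentially the same route as the paper's proof: both establish that $A \cap B$ is an attractor by taking the maximum of the two transient times, and then invoke minimality of $A$ to conclude $A \cap B = A$. The only difference is that the paper phrases the final step as a proof by contradiction, whereas you argue directly, which if anything is slightly cleaner.
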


\begin{proof}
	Suppose for contradiction $A \nsubseteq B$. Since $A$ and $B$ are both attractor sets there exists $T_1, T_2>0$ such that $\phi(x_0,t) \in A$ for all $t>T_1$ and $\phi(x_0,t) \in B$ for all $t>T_2$. Thus for $t>\max \{T_1,T_2\}$ we have, $\phi(x_0,t)\in A \cap B$; proving $A \cap B \ne \emptyset$. Furthermore the same argument shows $A \cap B$ is an attractor set. Since, by assumption, $A \nsubseteq B$ and we have also shown $A \cap B \ne \emptyset$ it follows that $A \cap B$ is proper subset of $A$ contradicting $A$ is a minimal attractor. \end{proof}


\begin{cor} \label{cor: unique attractor}
	A system described by an ODE of the form \eqref{eqn: ODE} admits at most one minimal attractor.
\end{cor}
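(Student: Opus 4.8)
The plan is to prove Corollary \ref{cor: unique attractor} by contradiction, leveraging Lemma \ref{lem: min attractor contained in attractors} directly. Suppose a system of the form \eqref{eqn: ODE} admits two distinct minimal attractors, say $A_1$ and $A_2$. Since both are attractors, I would apply Lemma \ref{lem: min attractor contained in attractors} twice: taking $A = A_1$ and $B = A_2$ (both being attractors, with $A_1$ minimal) yields $A_1 \subseteq A_2$, and taking $A = A_2$ and $B = A_1$ (with $A_2$ minimal) yields $A_2 \subseteq A_1$. Mutual containment then forces $A_1 = A_2$, contradicting the assumption that they are distinct.

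The key observation making this work is that Lemma \ref{lem: min attractor contained in attractors} only requires one of the two sets to be a minimal attractor while the other need only be an attractor. Since I am assuming \emph{both} $A_1$ and $A_2$ are minimal attractors, each is in particular an attractor, so the hypotheses of the lemma are satisfied in both directions. This symmetric double application is the entire content of the argument.

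I anticipate essentially no obstacle here, as the result is an immediate consequence of the preceding lemma combined with the antisymmetry of set inclusion. The only point requiring minor care is verifying that a minimal attractor is indeed an attractor (so that it qualifies as the set $B$ in the lemma), which is immediate from the definition of minimal attractor as an attractor with no proper attractor subsets. Thus the proof reduces to stating: if $A_1$ and $A_2$ are both minimal attractors, then $A_1 \subseteq A_2$ and $A_2 \subseteq A_1$ by Lemma \ref{lem: min attractor contained in attractors}, whence $A_1 = A_2$, establishing uniqueness.
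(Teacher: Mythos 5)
Your proof is correct and is essentially identical to the paper's own argument: both apply Lemma \ref{lem: min attractor contained in attractors} in each direction to obtain $A_1 \subseteq A_2$ and $A_2 \subseteq A_1$, then conclude $A_1 = A_2$ by antisymmetry of inclusion. The framing as a contradiction rather than a direct argument is immaterial.
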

\begin{proof}
	Suppose $A_1$ and $A_2$ are minimal attractor sets for an ODE of the form \eqref{eqn: ODE}. By Lemma \ref{lem: min attractor contained in attractors} we have $A_1 \subseteq A_2$ and $A_2 \subseteq A_1$; therefore proving $A_1=A_2$.
	\end{proof}

\begin{thm} \label{thm: Lyapunov invariant sets}
	Consider some ODE of the Form \eqref{eqn: ODE}. Suppose there exists $V: \R^n \to \R$ such that,
	\begin{align} \label{eqn: hypothesis of theorem 2}
	& V(x)>0 \text{ for all } x \notin D\\ \nonumber
	 & \nabla{V}(x)^T f(x) < 0 \text{ for all } x \notin D.
	\end{align}
	Then if $\gamma>0$ is such that $D \subset L(V, \gamma)$ we have that $L(V, \gamma)$ is an invariant set. Moreover \eqref{eqn: ODE} has a minimal attractor, $A$, and $A \subseteq L(V,\gamma)$.
\end{thm}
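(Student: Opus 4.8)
The plan is to establish the three claims in sequence: invariance of $L(V,\gamma)$, the fact that $L(V,\gamma)$ is itself an attractor, and finally the existence of a minimal attractor $A$ contained in it. First I would prove invariance by a first-exit-time argument along trajectories. For $x_0 \in L(V,\gamma)$ set $W(t) := V(\phi(x_0,t))$, so that $\dot W(t) = \nabla V(\phi(x_0,t))^T f(\phi(x_0,t))$ by the chain rule and the definition of the solution map. The key observation is that the complement of $L(V,\gamma)$ is disjoint from $D$: since $D \subset L(V,\gamma)$, every point with $V(x) \ge \gamma$ satisfies $x \notin D$, and in particular every boundary point $\{x : V(x)=\gamma\}$ lies outside $D$. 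Hence, were the trajectory to reach the boundary at a first time $t^* := \inf\{t>0 : V(\phi(x_0,t)) = \gamma\}$, we would have $\dot W(t^*) < 0$ by the second condition in \eqref{eqn: hypothesis of theorem 2}; but $W$ approaching the value $\gamma$ from below at $t^*$ forces $\dot W(t^*) \ge 0$, a contradiction. Thus no trajectory leaves $L(V,\gamma)$, proving invariance.

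Next I would show $L(V,\gamma)$ is an attractor, i.e. every trajectory enters it in finite time, after which invariance keeps it inside. For $x_0 \notin L(V,\gamma)$, as long as the trajectory remains outside $L(V,\gamma)$ it remains outside $D$, so $V$ is strictly decreasing along it by \eqref{eqn: hypothesis of theorem 2}. To upgrade ``strictly decreasing'' to ``reaches $\gamma$ in finite time,'' I would work on the shell $K_c := \{x : \gamma \le V(x) \le c\}$ with $c = V(x_0)$: assuming $V$ has compact sublevel sets (radial unboundedness), $K_c$ is compact and disjoint from $D$, so the continuous map $x \mapsto \nabla V(x)^T f(x)$ attains a maximum $-\delta < 0$ on it. Then $\dot W(t) \le -\delta$ while the trajectory lies in $K_c$, forcing entry into $L(V,\gamma)$ by time $(c-\gamma)/\delta$, and compactness of $K_c$ simultaneously rules out finite-time escape to infinity before entry. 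Combined with invariance, this makes $L(V,\gamma)$ an attractor in the sense of the paper's definition.

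Finally, for existence of a minimal attractor and the containment $A \subseteq L(V,\gamma)$, I would take $A := \bigcap_{t \ge 0} \overline{\phi(L(V,\gamma),t)}$, the $\omega$-limit set of the absorbing set $L(V,\gamma)$. Using invariance together with the semigroup property one checks that the forward images are nested decreasing, $\phi(L(V,\gamma),t_2) \subseteq \phi(L(V,\gamma),t_1)$ for $t_2 \ge t_1$, so by compactness and the finite-intersection property $A$ is nonempty, compact, and invariant, with $A \subseteq L(V,\gamma)$ immediate from the construction. One then verifies that $A$ attracts every trajectory, since $L(V,\gamma)$ does and the forward images collapse onto $A$, and that $A$ is minimal; Corollary \ref{cor: unique attractor} and Lemma \ref{lem: min attractor contained in attractors} then identify it as the unique minimal attractor.

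I expect the \emph{main obstacle} to be this last step. The decrease condition \eqref{eqn: hypothesis of theorem 2} directly yields only an attracting invariant set, and extracting a genuinely minimal attractor — proving that $A$ has no proper attracting subset rather than merely some attractor — is the delicate part, and it is precisely where the $\omega$-limit construction must do the real work. A secondary but genuine technical gap is the implicit requirement that $V$ have compact sublevel sets in the finite-time-entry argument; without such radial unboundedness, $V$ could decrease toward $\gamma$ only asymptotically and the conclusion that trajectories enter $L(V,\gamma)$ in finite time would fail.
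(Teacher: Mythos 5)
Your first two steps match the paper's proof almost exactly. The paper proves invariance by the same first-exit-time contradiction: if $\phi(x_0,T_1)\notin L(V,\gamma)$, continuity gives a crossing time $T_2$ with $V(\phi(x_0,T_2))=\gamma$ and $\frac{d}{dt}V(\phi(x_0,t))|_{t=T_2}\ge 0$, contradicting $\nabla V(x)^Tf(x)<0$ on $\{x:V(x)=\gamma\}$ (which is disjoint from $D$). For the attractor property the paper simply asserts that $V$ strictly decreasing outside $L(V,\gamma)$ forces every trajectory to enter $L(V,\gamma)$ in finite time; your compactness argument on the shell $K_c=\{x:\gamma\le V(x)\le c\}$, giving a uniform bound $\dot W\le -\delta$, is a genuine repair of a step the paper leaves unjustified, at the price of an added hypothesis (compact sublevel sets / radial unboundedness) that the theorem statement does not contain. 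You are right that without it the paper's "strictly decreasing, hence enters in finite time" inference is not valid.

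Where you diverge substantively is the last claim. The paper does \emph{not} construct the minimal attractor: having shown $L(V,\gamma)$ is an attractor, it immediately invokes Lemma \ref{lem: min attractor contained in attractors} to get $A\subseteq L(V,\gamma)$ --- but that lemma \emph{presupposes} a minimal attractor exists, so the existence part of the theorem is never actually proved in the paper. Your $\omega$-limit construction $A:=\bigcap_{t\ge 0}\overline{\phi(L(V,\gamma),t)}$ is the natural way to try to fill that hole, but note that it does not work under the paper's definition of attractor, which demands $\phi(x_0,t)\in A$ for all $t>T$ with $T$ \emph{finite}. For a globally asymptotically stable equilibrium, for instance, your $A$ is the singleton equilibrium, which trajectories approach but never enter; it is not an attractor in the paper's sense, and indeed a minimal attractor in that finite-time sense need not exist at all. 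So the obstacle you flagged as "the delicate part" is real and is not resolved by the $\omega$-limit argument; it is also not resolved by the paper, which silently assumes existence. For the containment claim itself, the paper's route (prove $L(V,\gamma)$ is an attractor, then apply Lemma \ref{lem: min attractor contained in attractors} to the assumed minimal attractor) is the shorter and intended one, and your proof should end there rather than attempting the existence construction.
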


\begin{proof}
Consider $\gamma>0$ such that $D \subset L(V,\gamma)$ we first show $L(V, \gamma)$ is invariant. Let $x_0 \in L(V,\gamma)$ and suppose for contradiction $\exists T_{1}>0$ such that $\phi(x_0,T_1) \notin L(V,\gamma)$. By the continuity of the solution map $\exists T_2<T_1$ such that $\phi(x_0,T_2)=\gamma$ and $\frac{d}{dt}V(\phi(x_0,t))|_{t=T_2}>0$. However since $D \cap \{x: V(x)= \gamma\}= \emptyset$ we have $\nabla{V}(x)^Tf(x)<0$ for any $x \in \{x: V(x)= \gamma\}$. Therefore $\frac{d}{dt}V(\phi(x_0,t))|_{t=T_2}<0$ causing a contradiction.

Next we will show that $ L(V,\gamma)$ is an attractor and thus using Lemma \ref{lem: min attractor contained in attractors} $A \subset L(V,1)$. Because $\gamma>0$ is such that $D \subset L(V,\gamma)$ we have $\nabla{V}(x)^T f(x)<0$ for all $x \notin L(V,\gamma)$; i.e. $V$ is strictly decreasing along trajectories with initial conditions outside $L(V,\gamma)$. Thus for all $x_0 \notin L(V,\gamma)$ there exists $ T_{x_0}>0$ such that $\phi(x_0,t) \in L(V,\gamma)$ for all $t>T_{x_0}$; proving $ L(V,\gamma)$ is an attractor. \end{proof}


For given $D=\{x \in \R^n: g(x)\ge 0\}$ and $d \in \N$, we now propose an SOS program that heuristically solves \eqref{opt: general attractor} for an ODE \eqref{eqn: ODE} with a bounded minimal global attractor.
\begin{align} \label{opt: SOS to find attractors}
 \min_{X \in S^N_{++}} \{ & -\log \det X\}\\ \nonumber
& \text{subject to: } V(x)=z_d(x)^T X z_d(x)\\ \nonumber
& s_1, s_2 \in \sum_{SOS} \\ \nonumber
& (1-V) - s_1 g \in \sum_{SOS}  \\ \nonumber
& -\nabla V^T f +s_2 g \in \sum_{SOS} .
\end{align}
Here the constraint $(1-V) - s_1 g \in \sum_{SOS} $ implies $D \subseteq L(V,1)$ by Theorem \ref{thm: Psatz}. The constraint $-\nabla V^T f +s_2 g \in \sum_{SOS} $ implies $\nabla{V}(x)^Tf(x) \le 0$ for all $x \notin D$ by Theorem \ref{thm: Psatz}. Then using Theorem \ref{thm: Lyapunov invariant sets} the constraints therefore imply $L(V,1)$ is an invariant set. Moreover if $\nabla{V}(x)^T f(x) < 0$ for all $x \notin L(V,1)$ then $L(V,1)$ contains the minimal attractor, $A$. In Section \ref{Section: Outer Approximation of sets} it is seen that minimizing the objective function in \eqref{opt: SOS to find attractors} heuristically minimizes $D_V(A,L(V,1))$; reducing the "distance" between the sets $A$ and $L(V,1)$.

\textbf{Note on selection of $g$ in \eqref{opt: SOS to find attractors}:}
	In the optimization problem \eqref{opt: SOS to find attractors} if we choose $D \subset A$, where $A$ is the minimal attractor, the problem becomes infeasible. This is because assuming we are able to find a such a feasible $V \in \sum_{SOS}$ then by the continuity of $V$ we would be able to find $\gamma>0$ such that $A \nsubseteq L(V,\gamma)$ and $D \subset L(V,\gamma)$. This contradicts Theorem \ref{thm: Lyapunov invariant sets}; that is if $V$ satisfies \eqref{eqn: hypothesis of theorem 2} and $D \subset L(V,\gamma)$ then $A \subseteq L(V,\gamma)$.
	
	We also don't want to select $D$ such that $A \subset D$. In the optimization problem \eqref{opt: SOS to find attractors} $L(V,1)$ is constrained so that $D \subset L(V,1)$. Therefore $L(V,1)$ will capture the shape of the set $D$ and not $A$.
	
	Ideally $D$ should be such that $A \nsubseteq D$ and $D \nsubseteq A$ to allow \eqref{opt: SOS to find attractors} to be feasible and $L(V,1)$ to show the shape of $A$. In our numerical results the set $D$ is carefully chosen using trajectory simulation results. Alternatively, if the approximate shape of the attractor is unknown, we have found that the best choice is generally $D=B(0,r)$ where $r>0$ is the smallest $r$ such that $B(0,r) \not\subset A$ (which implies infeasibility of~\eqref{opt: SOS to find attractors}). Bisection can be combined with feasibility of~\eqref{opt: SOS to find attractors} to find the smallest such $r>0$ such that \eqref{opt: SOS to find attractors} is feasible. A similar bisection method was used in \cite{Jones_2017}.

\section{Numerical Examples: Representing Attractor Sets as a Single Sublevel Set} \label{sec: Numerical Examples of Attractor Approximation}
In this section we will present the results of solving the optimization problem \eqref{opt: SOS to find attractors} for two dynamical systems, the Van der Poll oscillator and the Lorentz attractor. For these examples \eqref{opt: SOS to find attractors} was solved using SOSTOOLS \cite{Prajna_2002}, to reformulate the problem as an SDP, and SDPT3 \cite{sdpt3_2003}, to solve the resulting SDP.
	\begin{figure} 	
		\flushleft
		\includegraphics[scale=0.56]{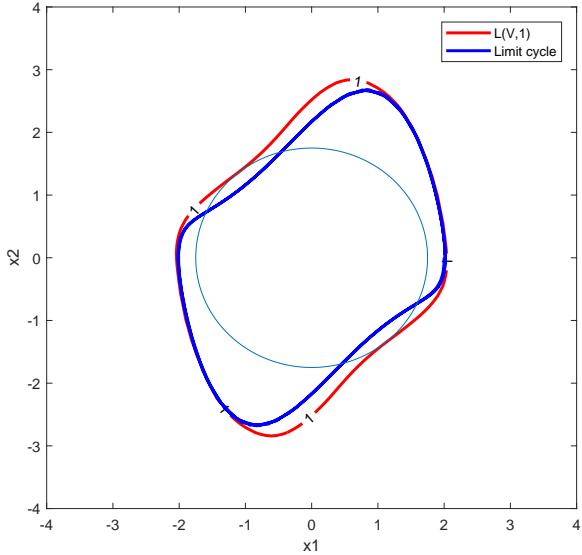}
		\caption{A sublevel set outer approximation of the Van der Pol limit cycle.} \label{fig:Van der pol}
	\end{figure}

\subsection{An Outer Approximation of the Limit Cycle of the Van der Pol Oscillator} \label{sec: Van der pol}
Consider the Van der Pol oscillator defined by the ODE:
\begin{align} \label{eqn: van der pol ode}
\dot{x}_1(t) & = x_2(t)\\ \nonumber
\dot{x}_2(t) & = -x_1(t) + x_2(t)(1- x_1^2(t)).
\end{align}

We applied the proposed method by solving the optimization problem \eqref{opt: SOS to find attractors} for $d=4$ and $D=\{x \in \R^2: x_1^2 +x_2^2 - 1.75^2 \ge 0\}$; where $D$ was hand picked based on simulated trajectory data. The results are displayed in Figure \ref{fig:Van der pol}. The limit cycle of \eqref{eqn: van der pol ode}, represented by the blue curve, was approximately found by forward-time integrating \eqref{eqn: van der pol ode} at an initial starting position close to the limit cycle. The set $D$ was selected such that it only contains part of the limit cycle of \eqref{eqn: van der pol ode}. As expected the boundary of $L(V,1)$ follows tightly across the boundary of the union of $D$ and the limit cycle.

\subsection{An Outer Approximation of the Attractor of the Lorenz Attractor} \label{sec: Lorenz}
We now consider a three dimensional dynamical system that exhibits chaotic characteristics under certain parameters. Consider the Lorenz attractor defined by the ODEs:
\begin{align} \label{eqn: Lorenz attactor}
\dot{x}_1(t) & = \sigma ( x_2(t) - x_1(t) ) \\ \nonumber
\dot{x}_2(t) & = \rho x_1(t) - x_2(t) - x_1(t)x_3(t) \\ \nonumber
\dot{x}_3(t) & = x_1(t)x_2(t) - \beta x_3(t)
\end{align}

\begin{lem} \label{lem: equalibrium points of Lorenz}
	The Lorenz attractor \eqref{eqn: Lorenz attactor} has three equilibrium points at $(0,0,0)^T$ and $( \pm \sqrt{ \beta( \rho -1)}, \pm \sqrt{ \beta( \rho -1)}, \rho -1  )^T$.
\end{lem}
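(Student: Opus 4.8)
The plan is to observe that the equilibrium points of \eqref{eqn: Lorenz attactor} are precisely the points $x \in \R^3$ at which the vector field vanishes, i.e. the solutions of the algebraic system obtained by setting the three right-hand sides of \eqref{eqn: Lorenz attactor} to zero. Since these right-hand sides are polynomial, the claim reduces to solving three simultaneous polynomial equations, and I would proceed by triangularizing the system one variable at a time rather than attacking it as a whole.

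First I would use the first equation, $\sigma(x_2 - x_1) = 0$. Working in the standard Lorenz parameter regime ($\sigma > 0$, $\beta > 0$, $\rho > 1$), this forces $x_2 = x_1$. Next, the third equation $x_1 x_2 - \beta x_3 = 0$, combined with $x_2 = x_1$, gives $x_3 = x_1^2/\beta$, so $x_3$ is determined by $x_1$ alone. Substituting both relations into the second equation $\rho x_1 - x_2 - x_1 x_3 = 0$ then yields a single equation in $x_1$ that factors as
\begin{equation*}
x_1\left(\rho - 1 - \frac{x_1^2}{\beta}\right) = 0.
\end{equation*}
Because this factorization captures the entire solution set, the two factors give exactly two cases and nothing more. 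Either $x_1 = 0$, which back-substitutes to the origin $(0,0,0)^T$; or $x_1^2 = \beta(\rho - 1)$, giving $x_1 = \pm\sqrt{\beta(\rho-1)}$, whence $x_2 = x_1 = \pm\sqrt{\beta(\rho-1)}$ and $x_3 = x_1^2/\beta = \rho - 1$. This produces precisely the three claimed equilibria and confirms there are no others.

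The computation itself is routine; the only real subtlety — and the point I would state explicitly — is the dependence on the parameter regime. The two nonzero equilibria are real only when $\beta(\rho - 1) \ge 0$, so the assertion that there are \emph{three} distinct equilibria implicitly assumes the chaotic regime $\rho > 1$ (with $\beta > 0$); when $\rho \le 1$ the origin is the unique equilibrium and the factorization degenerates. I would likewise note the harmless reliance on $\sigma \neq 0$ in the very first step, which holds for all physically meaningful Lorenz parameters. Thus the main obstacle is not any hard estimate but rather being careful about which parameter assumptions are needed for the stated count to hold.
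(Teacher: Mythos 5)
Your proposal is correct and follows exactly the route the paper indicates: the paper's proof is a one-line instruction to solve $f(x)=(0,0,0)^T$, and you simply carry out that elimination explicitly, with a sensible added remark about needing $\rho>1$ and $\sigma\neq 0$ (both satisfied by the paper's choice $\sigma=10$, $\rho=28$, $\beta=8/3$).
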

\begin{proof}
	The equilibrium points can be found by writing \eqref{eqn: Lorenz attactor} in the form $\dot{x}(t) = f(x(t))$ and solving the equations $f(x)=(0,0,0)^T$.	
\end{proof}

Throughout this paper we will only consider the case $ \sigma = 10$, $\rho=28$ and $\beta=\frac{8}{3}$ as Lorenz did in 1963. During our numerical results we make a coordinate change so the attractor is located in a unit box by defining
\begin{align} \label{eqn: Lorentz coordinate change}
\bar{x}_1 & :=  50 x_1  \\ \nonumber
\bar{x}_2  & := 50 x_2 \\ \nonumber
\bar{x}_3  & := 50 x_3 + 25  .
\end{align}

Figure \ref{fig:Lorentz_trajectory} shows the approximate shape of the Lorenz attractor under the change of coordinates \eqref{eqn: Lorentz coordinate change} found by forward-time integrating \eqref{eqn: Lorenz attactor} at an initial starting position of $(0,1,1.05)^T$.

	\begin{figure} 	
		\flushleft
		\includegraphics[scale=0.6]{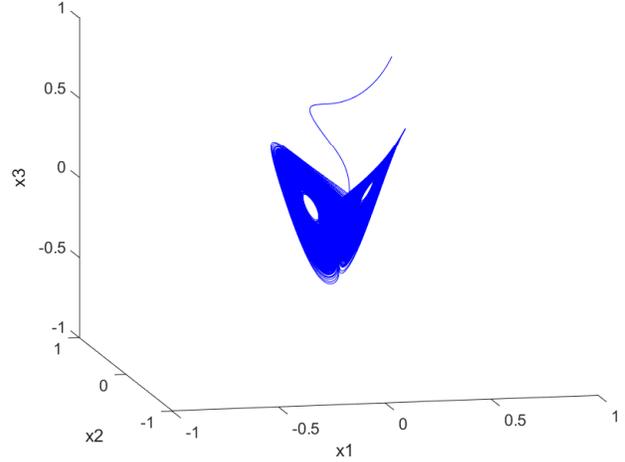}
		\caption{The shape of a trajectory from the Lorenz system found approximately by forward time integration.} \label{fig:Lorentz_trajectory}
	\end{figure}

Figure \ref{fig:lorenz_d_4_elipse_z_c_minus_0_3_x_r_0.6_y_r_0.01_z_r_005_60_degree} shows the boundary of the set $L(V,1)$, represented as the red shell, where $V: \R^3 \to \R$ solves the optimization problem \eqref{opt: SOS to find attractors} for the ODE \eqref{eqn: Lorenz attactor} under the change of coordinates \eqref{eqn: Lorentz coordinate change}. Here $d=4$ and $D$ is an elipsoid centered at $(0,0,-0.3)$ rotated by $60^{\circ}$ given by,
\begin{align} \label{eqn: D Lorentz}
D= \bigg\{ x & \in \R^3:  \frac{(\cos(\theta)x_1 - \sin(\theta) x_2 - c_1)^2}{r_1^2}  \\ \nonumber
&   + \frac{(\sin(\theta)x_1 + \cos(\theta) x_2 -c_2)^2}{r_2^2} + \frac{(x_3 - c_3)^2}{r_3^2} \le 1  \bigg\},
\end{align} where $r_1 = 0.6$, $r_2=0.01$, $r_3=0.05$, $c_1=0$, $c_2=0$, $c_3=-0.3$ and $\theta = 60^{\circ}$.

A sample trajectory of \eqref{eqn: Lorenz attactor} using the coordinates \eqref{eqn: Lorentz coordinate change}, represented by the blue curve in Figure \ref{fig:lorenz_d_4_elipse_z_c_minus_0_3_x_r_0.6_y_r_0.01_z_r_005_60_degree}, was approximately found by forward-time integrating \eqref{eqn: Lorenz attactor} at an initial starting position $(0,1,1.05)^T$. As expected the trajectory is attracted and travels inside the set $L(V,1)$ providing numerical evidence $L(V,1)$ contains the attractor.



		\begin{figure} 	
			\flushleft
			\includegraphics[scale=0.6]{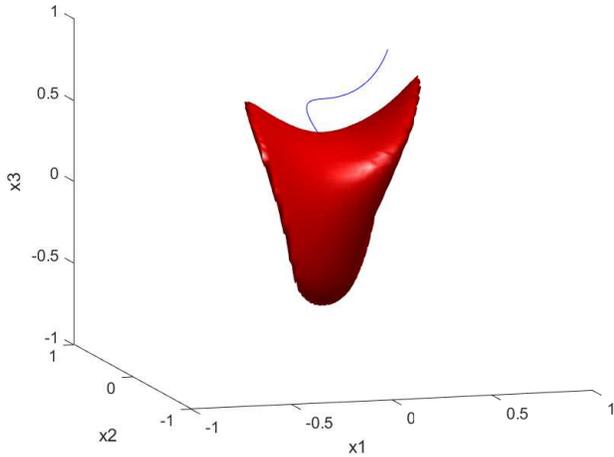}
			\caption{A sublevel set outer approximation of the Lorenz attractor.} \label{fig:lorenz_d_4_elipse_z_c_minus_0_3_x_r_0.6_y_r_0.01_z_r_005_60_degree}
		\end{figure}

\section{Conclusion} \label{sec:conclusion}
We have illustrated a method for finding optimal semialgebraic representations of unions and intersections of semi-algebraic sets with a single sublevel set of an SOS polynomial. We have shown how an objective function based on the determinant heuristically minimizes the volume of sublevel sets and can improve these outer approximations. Furthermore we have applied our methods to finding attractors of nonlinear systems using Lyapunov theory. Outer approximations for the attractors for the Van der Pol and Lorenz system were given. Our numerical examples demonstrate how our method can reveal the shape and properties of attractor sets associated with nonlinear differential equations.

\section{Future work} \label{sec:future work}
We will consider the generalization of the determinant to non-linear algebra. Namely hyper-determinants, discriminants and resultants \cite{Gelfand_1994} \cite{Dolotin_2008} \cite{Morozov_2009}. As in the linear case \eqref{eqn: det is ratio of volumes} we seek to derive a relationship between the volume of sets of the form $L(z_d(x)^T A z_d(x),1)$ and a convex function based on the generalization of the determinant of the SOS polynomial $z_d(x)^T A z_d(x)$.

\bibliographystyle{unsrt}
\bibliography{bib}

\begin{thebibliography}{10}

\bibitem{Colin_1982}
S~Collin.
\newblock {\em The Lorenz Equations: Bifurcations, Chaos, and Strange
  Attractors}.
\newblock Springer-Verlag, 1982.

\bibitem{Strogatz_1994}
S~Strogatz.
\newblock {\em Nonlinear Dynamics and Chaos}.
\newblock Addison_Wesley Publishing Company, 1994.

\bibitem{Lee_2016}
E~Lee.
\newblock Fundemental limits of cyber_physical systems modeling.
\newblock {\em ACM Trans. Cyber-Phys}, 2016.

\bibitem{Li_2004}
D~Li, J~Lu, X~Wu, and G~Chen.
\newblock Estimating the bounds for the lorenz family of chaotic systems.
\newblock {\em Chaos, Solitons and Fractals}, 23:529--534, 2005.

\bibitem{Yu_2005}
P~Yu and X~Liao.
\newblock Globally attractive and positive invariant set of the lorenz system.
\newblock {\em Internation Journal of Bifurcation and Chaos}, 16:757--764,
  2005.

\bibitem{Wang_2012}
T~Wang, S~Lall, and M~West.
\newblock Polynomial level-set method for attractpr estimation.
\newblock {\em Journal of The Franklin Institute}, 349:2783--2798, 2012.

\bibitem{Goluskin_2018}
D~Goluskin.
\newblock Bounding extreme values on attractors using sum-of-squares
  optimization, with application to the lorenz attractor.
\newblock {\em arXiv}, 2018.

\bibitem{Chakroborty_2011_falling_leaf_mode}
A~Chakraborty, P~Seiler, and G~Balas.
\newblock Susceptibillity of f/a-18 flight controllers to the falling-leaf
  mode: Nonlinear analysis.
\newblock {\em Journal of Guidance, Control, and Dynamics}, 2011.

\bibitem{Chakroborty_2011}
A~Chakraborty, P~Seiler, and G~Balas.
\newblock Nonlinear region of attraction analysis for flight control
  verification and validation.
\newblock {\em Control Engineering Practice}, 2011.

\bibitem{Khrabrov_2017}
A~Khrabrov, M~Sidoryuk, and D~Ignatyev.
\newblock Estimation of regions of attraction of spin modes.
\newblock {\em EUCASS}, 2017.

\bibitem{Ghaoui_2001_ellipse_state_est}
L~Ghaoui and G~Calafiore.
\newblock Robust filtering for discrete-time systems with bounded noise and
  parametric uncertainty.
\newblock {\em IEEE Transactions on Automatic Control}, 46, 2001.

\bibitem{Durieu_2001_ellipsoidal_state_est}
C~Durieu, E~Walter, and B~Polyak.
\newblock Multi-input multi-output ellipsoidal state bounding.
\newblock {\em Journal of Optimization Theory and Applications}, 111:273--303,
  2001.

\bibitem{Alamo_2005_zonotope_state_est}
T~Alamo, J~Bravo, and E~Camacho.
\newblock Guaranteed state estimation by zonotopes.
\newblock {\em Automatica}, 41:1035--1043, 2005.

\bibitem{Ingimundarson_2009_zonotopes_fault_detection}
A~Ingimundarson, J~bravo, V~Puig, T~Alamo, and P~Guerra.
\newblock Robust fault detection using zonotope-based set-membership
  consistency test.
\newblock {\em International Jounral of Adaptive Control and Signal
  Processing}, 23:311--330, 2009.

\bibitem{Hwang_2003_polytopic_reachable_sets}
I~Hwang, D~Stipanovic, and C~Tomlin.
\newblock Applications of polytopic approximations of reachable sets to linear
  dynamic games and a class of nonlinear systems.
\newblock {\em American Control Conference}, 2003.

\bibitem{Boyd_1994}
S~Boyd, L~Ghaoui, E~Feron, and V~Balakrishnan.
\newblock {\em Linear Matrix Inequalities in System and Control Theory}.
\newblock SIAM studies in Applied Mathematics, 1994.

\bibitem{Magnani_2005}
A~Magnani, S~Lall, and S~Boyd.
\newblock Tractable fitting with convex polynomials via sum of squares.
\newblock {\em CDC}, 2005.

\bibitem{Ahmadi_2017}
A~Ahmadi, G~Hall, A~Makadia, and V~Sindhwani.
\newblock Geometry of 3d environments and sum of squares polynomials.
\newblock {\em arXiv}, 2017.

\bibitem{Dabbene_2017}
F~Dabbene, D~Henrion, and C~Lagoa.
\newblock Simple approximations of semialgebraic sets and their application to
  control.
\newblock {\em Automatica, Elsevier}, 78:110--118, 2017.

\bibitem{Rudin_1987}
W~Rudin.
\newblock {\em Real and Complex Analysis}.
\newblock McGraw-Hill Book Company, 1987.

\bibitem{Boyd_2004}
S~Boyd and L~Vandenberghe.
\newblock {\em Convex Oprimization}.
\newblock Cambridge University Press, 2004.

\bibitem{Blum_1998}
L~Blum, F~Cucker, M~Shub, and S~Smale.
\newblock {\em Complexity and Real Computation}.
\newblock Springer, 1998.

\bibitem{Hilbert_1888}
D~Hilbert.
\newblock uber die darstellung definiter formen als summe von formenquadraten.
\newblock {\em Math.Ann}, 1888.

\bibitem{Prajna_2002}
S~Prajna, A~Papachristodoulou, and P~Parrilo.
\newblock Introducing sostools: a general sum of squares solver.
\newblock {\em CDC}, 2002.

\bibitem{Nesterov_1994}
S~Prajna, A~Papachristodoulou, and P~Parrilo.
\newblock {\em Convex Programming, chapter Interior Point Polynomial
  Algorithms}.
\newblock SIAM Studies in Applied Mathematics, 1994.

\bibitem{Putinar_1993}
M~Putinar.
\newblock Positive polynomials on compact semialgebriac sets.
\newblock {\em Math J}, 1993.

\bibitem{sdpt3_2003}
R~Tutuncu, K~Toh, and M~Todd.
\newblock Solving semidefinite-quadratic-linear programs using sdpt3.
\newblock {\em Springer-Verlag}, 2002.

\bibitem{Jones_2017}
M~Jones, H~Mohammadi, and MM~Peet.
\newblock Estimating the region of attraction using polynomial optimization: A
  converse lyapunov result.
\newblock {\em CDC}, 56:1796--1802, 2017.

\bibitem{Gelfand_1994}
I~Gelfand, M~Kapranov, and A~Zelevinsky.
\newblock {\em Discriminants, Resultants, and Multidimensional Determinants}.
\newblock Modern Birkhauser Classics, 1994.

\bibitem{Dolotin_2008}
V~Dolotin and A~Morozov.
\newblock Introduction to non-linear algebra.
\newblock {\em arXiv}, 2008.

\bibitem{Morozov_2009}
A~Morozov and S~Shakirov.
\newblock New and old results in resultant theory.
\newblock {\em arXiv}, 2009.

\end{thebibliography}
\end{document}